\newtheorem{theorem}{Theorem}[section]
\theoremstyle{plain}
\newtheorem{corollary}[theorem]{Corollary}
\newtheorem{definition}[theorem]{Definition}
\newtheorem{lemma}[theorem]{Lemma}
\newtheorem{proposition}[theorem]{Proposition}
\theoremstyle{remark}
\newtheorem{remark}[theorem]{Remark}
\numberwithin{equation}{section}
\begin{document}
\title[Shrinking targets, fast mixing, geodesic flow]{Shrinking targets in
fast mixing flows and the geodesic flow on negatively curved manifolds}
\author{Stefano Galatolo $^1$}
\email{$^1$ s.galatolo@ing.unipi.it}
\address{Dipartimento di Matematica Applicata "U. Dini", Via \ Buonarroti 1,
Pisa}
\author{Isaia Nisoli $^2$}
\email{$^2$ nisoli@mail.dm.unipi.it}
\address{Dipartimento di Matematica "L. Tonelli", Largo Pontecorvo 5 , Pisa}

\begin{abstract}
We show that in a rapidly mixing flow with an invariant measure, the time
which is needed to hit a given section is related to a sort of conditional
dimension of the measure at the section. The result is applied to the
geodesic flow of compact variable negative sectional curvature manifolds,
establishing a logarithm law for such kind of flow.
\end{abstract}

\subjclass{37D40 ; 37A25; 37A10; 37J55}
\keywords{Flows; logarithm laws; shrinking targets; decay of correlations;
geodesic flow}
\maketitle

\section{Introduction}

Let $M$ be a differentiable manifold and let $\Phi ^{t}$ be a $C^{1}$ flow
in $M$. Let $A\subset M$ be a set. Let us consider the hitting time of the
orbit of $x\in M$ to the set $A$%
\begin{equation}
\tau (x,A)=\inf \{t\in \mathbb{R}^{+}:\Phi ^{t}(x)\in A\}.  \label{000}
\end{equation}

If we consider a decreasing sequence of shrinking target sets $A_{n}$,
sometimes it is worth to have an estimation about the time needed for
typical orbits to enter in $A_{n}$. If we suppose that the flow preserves a
measure $\mu $, in a large class of situations sharing fast mixing or
generic arithmetic behavior, a result of the following type holds%
\begin{equation}
\tau (x,A_{n})\sim \frac{1}{\mu (A_{n})}  \label{sim}
\end{equation}%
(where $\sim $ stand for some kind of more or less strict equivalence in the
asymptotic behavior, when $n\rightarrow \infty $). In discrete time systems
earlier results of this kind were given in \cite{Erdosren},\cite{Phi}.
Results of this kind are often called logarithm laws. As far as we know, the
first place where this name was used for such a result was in \cite{Su}.
Since it is quite strictly related to the main application we present here,
let us describe this classical result and relate it with the above formula.

Let $\mathbb{H}_{k+1}$ stand for the $k+1$-dimensional real hyperbolic space
(with curvature $-1$ ). Let us consider a discrete group $G$ of hyperbolic
isometries of $\mathbb{H}_{k+1}$ such that $Y=\mathbb{H}_{k+1}/G$ is not
compact and has finite volume. Let $T^{1}Y$ be its unitary tangent bundle.
Let $\pi_1 :T^{1}Y\rightarrow Y$ the canonical projection, $\Phi ^{t}$ the
geodesic flow on $T^{1}Y$, $\mu $ the Liouville measure on $T^{1}Y$, and $d$
the distance on $Y.$ In \cite{Su} Sullivan proved a law for the speed of
approaching of typical geodesics to the point at infinity.

\begin{theorem}[Sullivan]
For all $p\in Y$ and $\mu $ almost each $v\in T^{1}Y$%
\begin{equation}
\underset{t\rightarrow \infty }{\lim \sup }\frac{d(p,\pi_1 (\Phi ^{t}v))}{%
\log t}=\frac{1}{k}.  \label{loglaw1}
\end{equation}
\end{theorem}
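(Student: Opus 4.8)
The plan is to read \eqref{loglaw1} as the instance of the heuristic \eqref{sim} attached to the family of cusp neighbourhoods of $Y$, and to prove the two inequalities $\limsup\le 1/k$ and $\limsup\ge 1/k$ separately; only the second will need quantitative mixing of the geodesic flow. For the reduction I would set, for $r>0$, $A_{r}=\{v\in T^{1}Y:\ d(p,\pi_{1}(v))\ge r\}$, so that $d(p,\pi_{1}(\Phi^{t}v))\ge r$ exactly when $\Phi^{t}v\in A_{r}$. Since $Y$ is non-compact of finite volume it has finitely many cusps, and in the upper half-space model a cusp is $\{(x,y):x\in\mathbb{R}^{k}/\Lambda,\ y\ge y_{0}\}$ with metric $(|dx|^{2}+dy^{2})/y^{2}$, hence volume form $y^{-(k+1)}\,dx\,dy$, while the distance of $(x,y)$ from the thick part is $\log y+O(1)$; the thick part itself is bounded. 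Thus for large $r$, up to a bounded additive error in $r$, $A_{r}$ is the disjoint union over the cusps of the horoball pieces $\{y\ge e^{r}\}$, and
\[
\mu(A_{r})\ \asymp\ \sum_{\mathrm{cusps}}\int_{e^{r}}^{\infty}y^{-(k+1)}\,dy\ \asymp\ e^{-kr}\qquad(r\to\infty),
\]
the $S^{k}$ fibre contributing only a bounded factor; the exponent is exactly $k=\dim Y-1$. I would also fix, for each $r$, a Lipschitz $\psi_{r}$ with $\chi_{A_{r}}\le\psi_{r}\le\chi_{A_{r-1}}$ got by smoothing the characteristic function in the cusp-height coordinate $\log y$ over an interval of fixed hyperbolic length; as the bounding horospheres are smooth, this is possible with $\mathrm{Lip}(\psi_{r})\le L$ for a constant $L$ independent of $r$, and clearly $\mu(A_{r-1})\asymp\mu(A_{r})$.

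For the upper bound I would fix $\epsilon>0$, set $r_{n}=(1+\epsilon)k^{-1}\log n$ for $n\in\mathbb{N}$, note $\sum_{n}\mu(A_{r_{n}})\asymp\sum_{n}n^{-(1+\epsilon)}<\infty$, and invoke the convergence half of the Borel--Cantelli lemma (no independence needed): for $\mu$-a.e.\ $v$ one has $\Phi^{n}v\in A_{r_{n}}$ for only finitely many integers $n$. Since $t\mapsto\pi_{1}(\Phi^{t}v)$ has unit speed, $|d(p,\pi_{1}(\Phi^{t}v))-d(p,\pi_{1}(\Phi^{\lfloor t\rfloor}v))|\le 1$, so for all large $t$ one gets $d(p,\pi_{1}(\Phi^{t}v))\le r_{\lfloor t\rfloor}+1\le (1+2\epsilon)k^{-1}\log t$; letting $\epsilon\downarrow 0$ along a sequence gives $\limsup_{t\to\infty}d(p,\pi_{1}(\Phi^{t}v))/\log t\le 1/k$ for $\mu$-a.e.\ $v$.

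The substantial half is the lower bound, and this is where rapid mixing enters. I would fix $\epsilon>0$, take $r_{n}=(1-\epsilon)k^{-1}\log n$, so that $\sum_{n}\mu(A_{r_{n}})\asymp\sum_{n}n^{-(1-\epsilon)}=\infty$ and $\sum_{n\le N}\mu(A_{r_{n}})\asymp N^{\epsilon}$, and write $E_{n}=\Phi^{-n}A_{r_{n}}$; it suffices to prove $\mu(\limsup_{n}E_{n})=1$, since then $\mu$-a.e.\ $v$ has $d(p,\pi_{1}(\Phi^{n}v))\ge r_{n}$ infinitely often, whence $\limsup_{t}d(p,\pi_{1}(\Phi^{t}v))/\log t\ge (1-\epsilon)/k$, and one lets $\epsilon\downarrow 0$. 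The geodesic flow of a finite-volume hyperbolic manifold is exponentially mixing against Lipschitz observables (Moore, Ratner):
\[
\Big|\int f\,(g\circ\Phi^{s})\,d\mu-\mu(f)\,\mu(g)\Big|\ \le\ C\,e^{-cs}\,\|f\|_{\mathrm{Lip}}\,\|g\|_{\mathrm{Lip}}\qquad(s\ge 0).
\]
For $m<n$, applying this with $f=\psi_{r_{m}}$, $g=\psi_{r_{n}}$, $s=n-m$, and using $\mathrm{Lip}(\psi_{r})\le L$ and $\mu(A_{r-1})\asymp\mu(A_{r})$, one gets $\mu(E_{m}\cap E_{n})\le\mu(A_{r_{m}})\mu(A_{r_{n}})+C'e^{-c(n-m)}$; on the other hand the nesting $A_{r_{n}}\subset A_{r_{m}}$ gives the trivial bound $\mu(E_{m}\cap E_{n})\le\mu(A_{r_{n}})$, which is the better one when $n-m\le\tfrac{2}{c}\log n$. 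Splitting $\sum_{m,n\le N}\mu(E_{m}\cap E_{n})$ at this threshold, the close pairs contribute $\le\sum_{n\le N}\tfrac{2}{c}(\log n)\,\mu(A_{r_{n}})\asymp(\log N)N^{\epsilon}=o(N^{2\epsilon})$, the far pairs ($n-m>\tfrac{2}{c}\log n$) contribute at most $\sum_{n\le N}\sum_{j>(2/c)\log n}C'e^{-cj}\asymp\sum_{n\le N}n^{-2}=O(1)$, and the remaining main terms sum to $\big(\sum_{n\le N}\mu(A_{r_{n}})\big)^{2}\asymp N^{2\epsilon}$. Hence $\sum_{m,n\le N}\mu(E_{m}\cap E_{n})\le(1+o(1))\big(\sum_{n\le N}\mu(A_{r_{n}})\big)^{2}$, and the Kochen--Stone (quasi-independence) form of the Borel--Cantelli lemma yields $\mu(\limsup_{n}E_{n})=1$, completing the proof.

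\emph{Where the difficulty lies.} The upper bound is soft; the crux is making a divergent Borel--Cantelli statement work for genuinely \emph{shrinking} targets. Two points need care. First, the mixing estimate must be quantitative and usable against (smoothings of) the indicators $\chi_{A_{r_{n}}}$, and $\mathrm{Lip}(\psi_{r_{n}})$ must be bounded uniformly in $n$ --- this is precisely where the special geometry of cusp neighbourhoods, bounded by smooth horospheres, is used. Second, one must pass between integer and continuous time without degrading the constant $1/k$, and split the pair sum so that short-range correlations, where mixing is useless, are controlled by the nesting bound while long-range ones are killed by the exponential rate; exponential mixing leaves ample room, and in fact any polynomial rate beating the divergence of $\sum_{n}\mu(A_{r_{n}})$ would do. The whole scheme is exactly the programme in \eqref{sim}: the hitting time of $A_{r}$ is $\asymp\mu(A_{r})^{-1}\asymp e^{kr}$, which reads off as the logarithm law \eqref{loglaw1}.
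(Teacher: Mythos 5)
The paper offers no proof of this statement: it is Sullivan's theorem, quoted from \cite{Su} as motivation, and the paper's own machinery (Theorem \ref{GeneralResult} combined with Liverani's Theorem \ref{Liverani}) is confined to compact manifolds, whereas here $Y$ is a noncompact finite-volume quotient and the targets are cusp neighbourhoods. So there is no internal argument to compare with; what you propose is the mixing route later systematized by Kleinbock--Margulis \cite{KM} and Maucourant \cite{Maucourant} (and very much in the spirit of Theorem \ref{GeneralResult}): the cusp computation $\mu(A_r)\asymp e^{-kr}$, convergence Borel--Cantelli plus unit-speed interpolation for the upper bound, and a divergence Borel--Cantelli argument with quasi-independence supplied by exponential decay of correlations for the lower bound. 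Sullivan's original proof goes through the geometry of disjoint horoballs and a Borel--Cantelli lemma for cusp excursions rather than through a correlation-decay estimate, so your route is genuinely different from his, but it is a legitimate one and your splitting of the pair sum at $n-m\asymp\log n$ is exactly the right device.

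Two steps need repair before this is a proof. First, the inequality $\mu(E_m\cap E_n)\le\mu(A_{r_m})\mu(A_{r_n})+C'e^{-c(n-m)}$ is not what your smoothing delivers: since $\psi_r\le\chi_{A_{r-1}}$ and $\mu(A_{r-1})/\mu(A_r)\to e^{k}$, the main term carries a factor close to $e^{2k}$, so your pair sum is only bounded by $\bigl(e^{2k}+o(1)\bigr)\bigl(\sum_{n\le N}\mu(E_n)\bigr)^{2}$, and Kochen--Stone then yields $\mu(\limsup_n E_n)\ge e^{-2k}>0$, not $=1$ as you claim. This is fixable in either of two standard ways: observe that for each $\epsilon$ the set $\{v:\limsup_{t}d(p,\pi_1(\Phi^{t}v))/\log t\ge(1-\epsilon)/k\}$ is invariant under the flow, so ergodicity of the geodesic flow (Hopf) upgrades positive measure to full measure; or let the smoothing width shrink slowly with $N$, so the main-term constant tends to $1$ while the Lipschitz constants grow only logarithmically, which the exponential error term absorbs. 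Second, the mixing input must be stated with care: Liverani's theorem used in this paper requires compactness, so you need the exponential decay of correlations for the geodesic flow on finite-volume hyperbolic manifolds coming from the spectral gap (this is the content of \cite{KM}, not of Howe--Moore or Ratner in the form you cite), and you must check that the estimate applies to bounded, uniformly Lipschitz test functions such as your $\psi_{r_n}$, which are supported arbitrarily far out in the cusp; the usual statements are in terms of Sobolev norms of smooth functions, and the passage to Lipschitz observables is routine but cannot simply be asserted.
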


It is interesting to remark that the above result implies that there are
infinitely many times $t_{1},t_{2},...$where the geodesic reaches a "locally
maximal" distance from $p$, and $t_{i}\sim e^{kd(p,\pi (\Phi ^{t_{i}}v))}$.
We remark that in the above example, setting the target set $Y_{L}=\{v\in
T^{1}Y,\,d(p,\pi (v))\geq L\}$ then $\mu (Y_{L})\sim e^{-kL}$ (see proof of
theorem 6 in \cite{Su}) thus it follows that in this example, to enter in
the set $Y_{L}$ you need a time of the order of $1/{\mu (Y_{L})}$ (to be
more precise, this holds for a sequence of values of $L$). This shows a
connection with formula \eqref{sim} and gives, as an example, the following
consequence%
\begin{equation}
\underset{L\rightarrow \infty }{\lim \inf }\frac{\log ~\tau (v,Y_{L})}{-\log
\mu (Y_{L})}=1.  \label{loglaw2}
\end{equation}

The Sullivan result on the geodesic flow was generalized to many other
contexts. For example, Kleinbock and Margulis (\cite{KM}) generalized it to
homogeneous spaces. Another generalization near to the context of the
present paper was given in \cite{Maucourant} (see Theorem \ref{mau}) where
this kind of result is extended to hyperbolic manifolds having constant
curvature and considering target sets which are balls of the base manifold%
\footnote{%
To be more precise, as we will see in the following, the target sets on the
space where the dynamics act (the tangent bundle to our manifold) are the
preimages of balls under the natural projection of the fibration.}.

Results similar to equation \eqref{loglaw1} using the Hausdorff dimension
instead of the invariant measure (a "Yarnik type" result)\ were given in 
\cite{BV} (see also \cite{HP2}). Logarithms laws for flows in other contexts
were given for example by \cite{AM}, \cite{GP} and \cite{Masur} (see also
the survey \cite{atr}).

In this paper we consider fastly mixing flows and give estimates for the
time which is needed to hit a given target, in the spirit of equation %
\eqref{loglaw2}. Equations like \eqref{loglaw2} then easily give logarithm
laws like \eqref{loglaw1}, and even more precise results (see Proposition %
\ref{loglaw5}). A main ingredient in this work is the speed of decay of
correlations.

It is nowadays well known that many aspects of the statistical behavior of a
deterministic chaotic system can be described by suitable versions of
theorems from the classical theory of probability (central limit, large
deviations, etc.); in the proof of those theorems, the independence
assumption, which is made in the probabilistic context, is often replaced by
weaker assumptions, also regarding the speed of correlations decay, which in
turn is related to the spectral properties of the system. This strongly
motivates the study of decay of correlations and spectral properties as a
tool to deduce many other consequences.

A general aim of the paper is to show how logarithm laws (and some more
precise shrinking target results) can follow directly from fast \emph{decay
of correlation} of the flow.

We will use decay of correlation with respect to Lipschitz observables. We
say that the system has decay of correlation with speed $\alpha $ with
respect to Lipschitz observables if for each $f,g:M\rightarrow \mathbb{R}$,
Lipschitz functions

\begin{equation}
\bigg|\int ~g(\phi ^{t}(x))f(x)d\mu -\int g(x)d\mu \int f(x)d\mu \bigg|\leq
C~\Vert g\Vert ~\Vert f||~\alpha (t)
\end{equation}%
where $\alpha (t)\rightarrow 0$ and $\left\vert \left\vert {.}\right\vert
\right\vert $ is the Lipschitz norm. We say that the speed of correlation
decay is superpolynomial if $\lim_{t\to\infty} t^{\gamma}\alpha(t)=0,$ $%
\forall \gamma >0.$

In Theorem \ref{GeneralResult} we will see that \emph{if the system has
superpolynomial speed of decay of correlations as above, the time which is
needed to cross a small transverse section is related to the measure of the
set of points which are sent to the section in a time less than $\epsilon ,$
in a sense similar to Eq. \eqref{loglaw2}. }

Moreover in Proposition \ref{propsec} we will see another general tool,
allowing to establish a logarithm law for a flow once we are able to obtain
it on a suitable Poincar\'{e} section, with the induced map.

As an application of Theorem \ref{GeneralResult} we show (see Theorem \ref%
{loglaw3} and the propositions that follow) a Maucourant type result (see
Theorem \ref{mau}) for (compact, connected) manifolds with \emph{variable}
negative sectional curvature. This will be a consequence of the above cited
Theorem \ref{GeneralResult} and of the exponential decay of correlation of
the geodesic flow of such manifolds, which was sharply estimated in \cite{Li}%
.

\noindent \textbf{Acknowledgements. }Isaia Nisoli would like to thank the
PRIN\ project "Metodi variazionali e topologici nello studio di fenomeni non
lineari" for funding is research through a contract of research with the
University of Bari.

\section{Logarithm law, superpolynomial decay, Poincar\'{e} sections}

Let $M$ be a differentiable manifold and let $\Phi ^{t}$ be a $C^{1}$ flow
in $M$. Let $A\subset M$ be a set. Let us consider the hitting time $\tau
(x,A)$ of the orbit of $x$ to the set $A$ as defined in Eq. \ref{000}.

We present the main general tool of the paper. A logarithm law is
established, in flows having fast enough decay of correlation.

\begin{theorem}
\label{GeneralResult} Suppose that $\Phi ^{t}$ as above preserves a measure $%
\mu $ and has superpolynomial decay of correlations with respect to
Lipschitz observables. Let $V$ a submanifold of $M$ which is transverse to $%
\Phi ^{t}$. Let $f:M\rightarrow \mathbb{R}^{+}$ be a Lipschitz function, let
us consider the target family%
\begin{equation}
B_{l}=\{x\in V,f(x)<l\}
\end{equation}%
and suppose that $B_{l}$ is bounded for some $l>0$. Let%
\begin{equation}
C_{\epsilon ,l}=\{x\in X,s.t.~\Phi ^{t}(x)\in B_{l},0\leq t<\epsilon \}.
\end{equation}%
Suppose that the following limit exists $\footnote{%
It is easy to see that since the measure is preserved, the outer limit ($%
\epsilon \rightarrow 0$) always exists.}$%
\begin{equation}
d=\lim_{\epsilon \rightarrow 0}\lim_{l\rightarrow 0}\frac{\log \mu
(C_{\epsilon ,l})}{\log l}
\end{equation}%
then%
\begin{equation}
\lim_{l\rightarrow 0}\frac{\log \tau (x,B_{l})}{-\log l}=d  \label{general}
\end{equation}%
for $\mu $-almost each $x.$
\end{theorem}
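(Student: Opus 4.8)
The plan is to reduce the hitting-time statement for the flow to a Borel--Cantelli / quantitative-recurrence type argument, using the decay of correlations to control the measure-theoretic overlap of the relevant sets, much as one does for hitting times of discrete-time systems with fast mixing. First I would dispose of the easy inequality. Since the measure $\mu$ is preserved, the orbit of $\mu$-a.e.\ $x$ enters $C_{\epsilon,l}$ infinitely often once $\mu(C_{\epsilon,l})>0$ (Poincar\'e recurrence), and an orbit entering $C_{\epsilon,l}$ hits $B_{l}$ within time $\epsilon$. A cheap counting argument shows that in a time window of length $T$ the orbit spends time roughly $\mu(C_{\epsilon,l})\cdot T$ inside $C_{\epsilon,l}$, so it must enter $B_{l}$ by a time comparable to $\epsilon/\mu(C_{\epsilon,l})$, whence $\limsup_{l\to 0}\frac{\log\tau(x,B_{l})}{-\log\mu(C_{\epsilon,l})}\le 1$; dividing numerator and denominator of $\frac{\log\tau}{-\log l}$ appropriately and letting $\epsilon\to 0$ then $l\to 0$ turns the factor $\log\mu(C_{\epsilon,l})/\log l$ into $d$, giving $\limsup_{l\to 0}\frac{\log\tau(x,B_{l})}{-\log l}\le d$.

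The substantial direction is the lower bound $\liminf_{l\to 0}\frac{\log\tau(x,B_{l})}{-\log l}\ge d$, i.e.\ typical orbits do \emph{not} hit $B_{l}$ too soon. Here I would fix a small $\beta>0$, pick a sequence $l_{n}\to 0$ (say $l_{n}=\theta^{n}$ for some $\theta<1$), and consider the ``bad'' events
\begin{equation}
E_{n}=\{x: \tau(x,B_{l_{n}})\le l_{n}^{-(d-\beta)}\}=\bigcup_{0\le t\le l_{n}^{-(d-\beta)}}\Phi^{-t}(B_{l_{n}}).
\end{equation}
One estimates $\mu(E_{n})$ from above: covering the time interval by $O(l_{n}^{-(d-\beta)}/\epsilon)$ windows of length $\epsilon$ and using $\Phi^{-t}(B_{l_{n}})\subset \Phi^{-s}(C_{\epsilon,l_{n}})$ for an appropriate $s$, one gets $\mu(E_{n})\lesssim \epsilon^{-1} l_{n}^{-(d-\beta)}\mu(C_{\epsilon,l_{n}})$. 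Since $\mu(C_{\epsilon,l})\approx l^{d}$ (up to the $\epsilon$-error absorbed in the iterated limit), this is summable in $n$ once $\beta$ and the geometric ratio of $l_{n}$ are chosen so that $l_{n}^{\beta}$ beats the number of windows; by Borel--Cantelli only finitely many $E_{n}$ occur, so $\tau(x,B_{l_{n}})> l_{n}^{-(d-\beta)}$ eventually, which gives the claim along $l_{n}$ and then for all $l$ by monotonicity of $\tau(\cdot,B_{l})$ in $l$.

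The role of superpolynomial decay of correlations is exactly to make the above counting honest: to pass from ``expected time spent in $C_{\epsilon,l}$'' to an almost-sure statement I need that the indicator-type observables associated to $B_{l}$ (suitably smoothed into Lipschitz functions, using that $f$ is Lipschitz and $B_{l}$ is bounded) decorrelate along the flow fast enough that the variance of the occupation time is controlled; a Chebyshev/maximal-inequality argument then upgrades the mean estimate to the measure bound on $E_{n}$ with a negligible correction. The main obstacle I anticipate is twofold: (i) replacing the characteristic function of $B_{l}\subset V$, which lives on a lower-dimensional section, by genuine Lipschitz functions on $M$ with controlled norm --- this is where the definition of $C_{\epsilon,l}$ as the $\epsilon$-thickening along the flow is used, and one must track how the Lipschitz constant blows up as $l\to 0$ and as $\epsilon\to 0$, and check superpolynomial decay still wins; and (ii) handling the two nested limits $\epsilon\to 0$ and $l\to 0$ so that the $\epsilon$-dependent errors in both inequalities wash out and leave precisely the quantity $d$. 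Everything else is the standard fast-mixing shrinking-target machinery.
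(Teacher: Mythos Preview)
You have the roles of the two inequalities reversed. The bound $\liminf_{l\to 0}\frac{\log\tau(x,B_l)}{-\log l}\ge d$ is the \emph{easy} one and needs no mixing: your estimate $\mu(E_n)\lesssim \epsilon^{-1}l_n^{-(d-\beta)}\mu(C_{\epsilon,l_n})$ is a plain union bound using only $\Phi$-invariance of $\mu$, and Borel--Cantelli finishes it (this is exactly inequality \eqref{1211} in Proposition~\ref{maine}, valid for any measure-preserving system). Conversely, your ``cheap counting argument'' for the upper bound $\limsup\le d$ is not valid as stated: the ergodic theorem is asymptotic and gives no control on when the orbit \emph{first} enters $C_{\epsilon,l}$, so one cannot conclude $\tau(x,B_l)\lesssim \epsilon/\mu(C_{\epsilon,l})$ from it. It is precisely this direction---showing that orbits \emph{do} hit $B_l$ by time roughly $l^{-d}$---that requires superpolynomial decay of correlations, via the second-moment/Chebyshev mechanism you describe. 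So your variance paragraph is attached to the wrong inequality.

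The paper's proof also differs from your outline in that it does not redo the mixing argument from scratch. Instead it constructs a single Lipschitz function $\overline{F}:M\to\mathbb{R}^+$ whose sublevels $S_l=\overline{F}^{-1}([0,l])$ satisfy $C_{\epsilon,l}\subset S_l$ and $\mu(S_l)\le 3\mu(C_{\epsilon,l})$, and then invokes the already-proved discrete-time result (Proposition~\ref{maine}, equation \eqref{lla}) for the time-$\epsilon$ map with targets $S_l$. Building $\overline{F}$ is exactly the Lipschitz-approximation issue you flag as obstacle (i): the flow-box theorem near the transverse section $V$ gives a Lipschitz projection $\mathrm{pr}$ onto $B_{\bar l}$, and one sets $\overline{F}(x)=\max\bigl(f(\mathrm{pr}(x)),\,h_\epsilon(x)\bigr)$ with $h_\epsilon$ a tent function in the flow-time variable. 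Relating the discrete hitting time $\tau_\epsilon(x,S_l)$ to the flow hitting time $\tau(x,B_l)$ is then an elementary sandwich. Your route can be repaired by swapping the two directions, but you would in effect be reproving Proposition~\ref{maine} inside the argument; the paper's reduction packages that work into a black box and keeps the proof of Theorem~\ref{GeneralResult} short.
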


We remark that there are mixing, smooth flows for which logarithm laws and
similar results does not hold (see \cite{GP09}), hence in a result like
Theorem \ref{GeneralResult} some assumptions on the speed of decay of
correlations are important.

The proof is based on a result about discrete time systems we are going to
recall: let $(X,T,\mu )$ be an ergodic, measure preserving transformation on
a metric space $X$. Let us consider a \ family of target sets $S_{r}$
indexed by a real parameter $r$ and the time needed for the orbit of a point 
$x$ to enter in $S_{r}$%
\begin{equation*}
\tau _{T}(x,S_{r})=\min \{n\in \mathbb{N}^{+}:T^{n}(x)\in S_{r}\}.
\end{equation*}

Let us suppose that target sets are of the form $S_{r}=\{x\in X~,~f(x)\leq
r\}$ where $f:X\rightarrow \mathbb{R}^{+}$ is a Lipschitz function and let
us consider the limits 
\begin{equation}
\overline{d}(f)=\underset{r\rightarrow 0}{\lim \sup }\frac{\log \mu (S_{r})}{%
\log (r)}~,~\underline{d}(f)=\underset{r\rightarrow 0}{\lim \inf }\frac{\log
\mu (S_{r})}{\log (r)}.
\end{equation}%
These limits represent a sort of local dimension (the formula for the local
dimension of $\mu $ at a point $x_{0}$ is obtained when $f(x)=d(x,x_{0})$).
When the above limits coincide, let us set $d(f)=\underline{d}(f)=\overline{d%
}(f).$ With this setting, in \cite{Ga}, the following result is proved:

\begin{proposition}
\label{maine}Let $f$ and $S_{r}$ be as above. Then for a.e. $x$%
\begin{equation}
\lim \sup_{r\rightarrow 0}\frac{\log \tau _{T}(x,S_{r})}{-\log r}\geq 
\overline{d}(f)~,~\lim \inf_{r\rightarrow 0}\frac{\log \tau _{T}(x,S_{r})}{%
-\log r}\geq \underline{d}(f).  \label{1211}
\end{equation}%
Moreover, if the system has superpolynomial decay of correlations, as above,
and $d(f)$ exists, then for a.e. $x$ it holds 
\begin{equation}
\lim_{r\rightarrow 0}\frac{\log \tau _{T}(x,S_{r})}{-\log r}=d(f).
\label{lla}
\end{equation}
\end{proposition}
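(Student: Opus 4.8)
The plan is to prove the two lower bounds in \eqref{1211} by a first Borel--Cantelli argument that uses only invariance of $\mu$, and then to establish the matching upper bound \eqref{lla} by a quantitative second-moment argument in which the missing independence is supplied by the decay of correlations. The point to keep in mind throughout is that $\tau_T(x,S_r)$ is non-increasing in $r$ (a larger target is hit sooner), so in both halves it suffices to control $\tau_T$ along a geometric sequence $r_j=2^{-j}$ and then interpolate between consecutive values of $r$.

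For the lower bounds, invariance of $\mu$ gives the union bound
\begin{equation*}
\mu\{x:\tau_T(x,S_{r_j})\le n_j\}=\mu\left(\bigcup_{k=1}^{n_j}T^{-k}S_{r_j}\right)\le n_j\,\mu(S_{r_j}).
\end{equation*}
Choosing $n_j\approx r_j^{-(\underline{d}(f)-\epsilon)}$ and using $\mu(S_{r_j})\le r_j^{\underline{d}(f)-\epsilon/2}$ (valid for large $j$ by the definition of $\underline{d}(f)$) makes the right-hand side summable in $j$. Borel--Cantelli then yields, for a.e.\ $x$, that $\tau_T(x,S_{r_j})>n_j$ for all large $j$; for $r\in[r_{j+1},r_j]$ one has $\tau_T(x,S_r)\ge\tau_T(x,S_{r_j})>n_j$, and since $(-\log r_j)/(-\log r_{j+1})\to1$ this upgrades to $\liminf_{r\to0}\frac{\log\tau_T(x,S_r)}{-\log r}\ge\underline{d}(f)-\epsilon$, hence $\ge\underline{d}(f)$. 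Running the same estimate along a sparse subsequence realizing the $\limsup$ in the definition of $\overline{d}(f)$ gives the $\limsup$ inequality. Note this half requires \emph{no} mixing.

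For the upper bound I would fix $\epsilon>0$ and show that, for a.e.\ $x$, $\tau_T(x,S_r)\le r^{-(d(f)+\epsilon)}$ for all small $r$, which together with the lower bound forces \eqref{lla}. Since $\mathbf{1}_{S_r}$ is not Lipschitz I would replace it by a Lipschitz function $\psi_r$ supported inside $S_r$, with $\mathbf{1}_{S_{r-\rho}}\le\psi_r\le\mathbf{1}_{S_r}$ and $\|\psi_r\|\le C\rho^{-1}$, and count visits through $W_n=\sum_{k=1}^n\psi_r\circ T^k$; because $\psi_r$ vanishes off $S_r$, one has $\{\tau_T(\cdot,S_r)>n\}\subseteq\{W_n=0\}$. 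Taking $n\approx\mu(S_r)^{-(1+\delta)}$ makes the mean $\int W_n\,d\mu=n\,\mu(\psi_r)\to\infty$, and by Chebyshev $\mu(W_n=0)\le \mathrm{Var}(W_n)/\left(\int W_n\,d\mu\right)^2$. Expanding the variance and using invariance, the off-diagonal terms are the correlations $\mu(\psi_r\cdot\psi_r\circ T^m)-\mu(\psi_r)^2$, which I would bound trivially by $\mu(S_r)$ for gaps $m\le m_0$ and by $C\|\psi_r\|^2\alpha(m)\le C'\rho^{-2}\alpha(m)$ for gaps $m>m_0$.

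The main obstacle is balancing the parameters $\rho,m_0,n$ in this variance estimate. The Lipschitz norm injects a factor $\rho^{-2}$ into the far-gap correlation bound, pushing toward a wide collar, while keeping $\left(\int W_n\,d\mu\right)^2$ effective forces the collar to be thin enough that $\mu(S_{r-\rho})\ge\frac12\mu(S_r)$, i.e.\ that the inner annulus is negligible. This is exactly where superpolynomial decay is indispensable: since $\alpha(m)$ beats every polynomial, the tail $\sum_{m>m_0}\alpha(m)$ can be made to dominate any fixed negative power of $r$ coming from $\rho^{-2}$ by taking $m_0$ a small power of $r^{-1}$ and the decay exponent $\gamma$ large, so that $\mathrm{Var}(W_n)=o\big((\int W_n\,d\mu)^2\big)$. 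Working along the geometric sequence $r_j$, chosen (using that $\mu(\{f=r\})=0$ for all but countably many $r$) so that the collar can be fixed with $\mu(S_{r_j-\rho_j})\ge\frac12\mu(S_{r_j})$, makes $\mu(W_{n_j}=0)$ summable; Borel--Cantelli gives $\tau_T(x,S_{r_j})\le n_j\le r_j^{-(d(f)+\epsilon)(1+\delta)}$ eventually, and the same monotonicity interpolation as before, together with $\epsilon,\delta$ arbitrary, yields $\limsup_{r\to0}\frac{\log\tau_T(x,S_r)}{-\log r}\le d(f)$ and completes the proof.
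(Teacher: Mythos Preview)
The paper does not give its own proof of this proposition: it is quoted verbatim from \cite{Ga} and used as a black box in the proof of Theorem~\ref{GeneralResult}. So there is nothing in the present paper to compare your argument against line by line. That said, your outline is exactly the standard route used in \cite{Ga}: a first Borel--Cantelli/union-bound argument for \eqref{1211} using only invariance, and a second-moment (Chebyshev) argument with a Lipschitz mollification of $\mathbf 1_{S_r}$ for the upper bound, where superpolynomial decay absorbs the polynomial blow-up of the Lipschitz norm.

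One point deserves tightening. You insist on a collar $\rho_j$ with $\mu(S_{r_j-\rho_j})\ge \tfrac12\,\mu(S_{r_j})$, and justify the choice by avoiding the countably many $r$ with $\mu(\{f=r\})>0$. Left-continuity of $r\mapsto\mu(S_r)$ at such $r_j$ does guarantee that \emph{some} $\rho_j>0$ works, but gives no lower bound on $\rho_j$; in principle $\rho_j$ could be forced to be super-polynomially small in $r_j$, and then $\rho_j^{-2}$ is no longer controlled by any power of $r_j^{-1}$, which is precisely what your ``superpolynomial beats every fixed power'' step needs. The fix is to drop the $\tfrac12$-comparison entirely and work directly at the logarithmic scale: take, say, $\rho_j=r_j/2$, so that $\|\psi_{r_j}\|\le C r_j^{-1}$ is polynomial in $r_j^{-1}$, and use only that $d(f)$ exists to get $\int\psi_{r_j}\,d\mu\ge\mu(S_{r_j/2})\ge r_j^{\,d(f)+\epsilon'}$ for large $j$. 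With $n_j=r_j^{-(d(f)+\epsilon)}$ and $m_0$ a small power of $r_j^{-1}$, the variance bound and Chebyshev then yield a summable estimate exactly as you describe, and the interpolation along $r_j=2^{-j}$ finishes the proof. With this adjustment your sketch is correct and matches the argument in \cite{Ga}.
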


Now, we are ready to prove Theorem \ref{GeneralResult}.

\begin{proof}
(of Theorem \ref{GeneralResult}) Let us suppose $\epsilon $ so small that $d$
is near to 
\begin{equation*}
\lim_{l\rightarrow 0}\frac{\log \mu (C_{\epsilon ,l})}{\log l}.
\end{equation*}%
First notice that we can suppose that $\mu (C_{\epsilon ,l})\rightarrow 0$
(as $l\rightarrow 0$), if it not so, passing to the limit, we have $d=0$. On
the other side, since the system is ergodic $\tau (x,B_{l})$ is bounded,
giving also 
\begin{equation*}
\lim_{l\rightarrow 0}\frac{\log \tau (x,B_{l})}{-\log l}=0.
\end{equation*}

Now, for $\epsilon \geq 0$, let us denote by $C_{-\epsilon ,l}$ the set $%
\{x\in X,s.t.~\Phi ^{t}(x)\in B_{l},0\geq t>-\epsilon \}$.

Consider $D=C_{2\epsilon ,l}\cup C_{-\epsilon ,l}$ and define a projection $%
\text{pr}:D\rightarrow B_{l}$ induced by the flow. Let us consider the time
needed for a point of $D$ to reach $B_{l}$, following the flow or its inverse%
\begin{equation*}
\overline{t}(x)=\left\{ 
\begin{array}{c}
\min \{t\geq 0|\Phi ^{t}(x)\in V\}~\text{ if }~x\in C_{2\epsilon ,l} \\ 
\max \{t\leq 0|\Phi ^{t}(x)\in V\}~\text{ if }~x\in C_{-\epsilon ,l}%
\end{array}%
\right.
\end{equation*}%
($\overline{t}:D\rightarrow \mathbb{R}$). Let us now define the projection pr%
$(x)=\Phi ^{\overline{t}(x)}(x)$.

Since the flow is transverse to $V$ and $\overline{B_{l}}$ is compact for $l$
small enough, we can choose $\epsilon $ and $\overline{l}$ so small that we
can apply the flow box theorem inside $C_{\epsilon ,\overline{l}}$ and pr is
Lipshitz map.

Now, to apply Proposition \ref{maine} we need to define a Lipschitz function 
$\overline{F}$ such that the sets $C_{\epsilon ,l}$ are contained in its
sublevels. To do this let us first define $h_{\epsilon }:D\rightarrow 
\mathbb{R}^{+}$ by $h_{\epsilon }(x)=\varphi _{\epsilon }(\overline{t}(x))$,
where 
\begin{equation*}
\varphi _{\epsilon }(t)=\left\{ 
\begin{array}{c}
-\overline{l}/\epsilon \cdot t~\text{ if }~t<0 \\ 
0~\text{ if }~0\leq t\leq \epsilon \\ 
\overline{l}/\epsilon \cdot (t-\epsilon )~t\geq \epsilon%
\end{array}%
\right. .
\end{equation*}

Finally define $\overline{F}:M\rightarrow \mathbb{R}^{+}$ by%
\begin{equation*}
\overline{F}(x)=\left\{ 
\begin{array}{c}
\overline{l},~\text{ if }~x\notin D \\ 
\max \left( f(\text{pr}(x)),h_{\epsilon }(x)\right) ~\text{ if }~x\in D%
\end{array}%
\right. .
\end{equation*}

Since $\text{pr}$ and all the other involved functions are Lipschitz then
also $\overline{F}$ is. Fixing $\epsilon $, as $l$ varies the sets $%
C_{\epsilon ,l}$ are contained in the sublevels $S_{l}=\overline{F}%
^{-1}([0,l])$. Notice that $\mu (C_{\epsilon ,l})\leq \mu (S_{l})\leq 3\mu
(C_{\epsilon ,l})$. We can consider the discrete time system induced by the
time-$\epsilon $ map of the flow. Let $\tau _{\epsilon }(x,B)=\min
\{n>0,\Phi ^{n\epsilon }(x)\in B\}$ be the hitting time for the time-$%
\epsilon $ map induced by the flow. Applying Proposition \ref{maine}
(Equation \ref{lla}) to these sets we obtain that for $\mu $-almost each $x$%
\begin{equation}
\lim_{l\rightarrow 0}\frac{\log \tau _{\epsilon }(x,S_{l})}{-\log l}%
=\lim_{l\rightarrow 0}\frac{\log (\mu (S_{l}))}{\log l}=\lim_{l\rightarrow 0}%
\frac{\log \mu (C_{\epsilon ,l})}{\log l}.  \label{111}
\end{equation}%
Let us consider $S=\cap _{l}S_{l}$, in a way similar as before, remark that
if $\mu (S)>0$ then $\lim_{l\rightarrow 0}\frac{\log (\mu (S_{l}))}{\log l}%
=0 $. Moreover, since the system is ergodic, $\tau (x,B_{l})$ is eventually
constant and also 
\begin{equation*}
\frac{\log \tau (x,B_{l})}{-\log l}\rightarrow 0
\end{equation*}%
a.e. and the wanted result is proved.

Hence it remains to consider the case $\mu (S)=0$. In this case we can
suppose $x\notin S_{l}$ for some $l$ small enough, which is a full measure
condition.

Considering that if $\Phi ^{\epsilon \tau _{\epsilon }(x,S_{l})}(x)\in
C_{-\epsilon ,l}$ and $x\notin S_{l}$ then the flow starting from $x$ has
already hit $B_{l}$ before arriving to $\Phi ^{\epsilon \tau _{\epsilon
}(x,S_{l})}(x)$ we have that 
\begin{equation}
\tau (x,B_{l})\leq \epsilon \tau _{\epsilon }(x,S_{l})+2\epsilon .
\label{1x}
\end{equation}

On the other hand, since $\Phi ^{\epsilon \left\lfloor \frac{\tau (x,B_{l})}{%
\epsilon }\right\rfloor }(x)\in C_{\epsilon ,l}\subset S_{l}$ then%
\begin{equation}
\tau _{\epsilon }(x,S_{l})\leq \left\lfloor \frac{\tau (x,B_{l})}{\epsilon }%
\right\rfloor  \label{2}
\end{equation}%
$\ $(where $\left\lfloor ~\right\rfloor $ stands for the integer part), and 
\begin{equation}
\tau _{\epsilon }(x,S_{l})\leq \frac{\tau (x,B_{l})}{\epsilon }+1.
\label{33}
\end{equation}%
Supposing $\Phi ^{t}x\notin \cap _{l}C_{\epsilon ,l}$ for each $t\geq 0$,
which is a full measure condition (recall that we suppose $\mu (C_{\epsilon
,l})\rightarrow 0$ ) we then have by $($\ref{1x}$)$ and $($\ref{33}$)$.%
\begin{equation*}
\lim_{l\rightarrow 0}\frac{\log \tau _{\epsilon }(x,S_{l})}{-\log l}%
=\lim_{l\rightarrow 0}\frac{\log \tau (x,B_{l})}{-\log l}
\end{equation*}%
for $\mu -$almost each $x$, and together with $($\ref{111}$)$ we have the
desired result.
\end{proof}

\begin{remark}
We remark that the boundedness and the transversality assumptions on $B_{l}$%
, are used in the proof only to obtain that pr is a Lipschitz map. Hence, if
this can be obtained elsewhere, the result still holds.
\end{remark}

\begin{remark}
By the same proof, using Equation \eqref{1211} it easily follows that in
general measure preserving flows%
\begin{eqnarray}
\underset{l\rightarrow 0}{\lim \sup }\frac{\log \tau (x,B_{l})}{-\log l}
&\geq &\lim_{\epsilon \rightarrow 0}\underset{l\rightarrow 0}{\lim \sup }%
\frac{\log \mu (C_{\epsilon ,l})}{\log l}, \\
\underset{l\rightarrow 0}{\lim \inf }\frac{\log \tau (x,B_{l})}{-\log l}
&\geq &\lim_{\epsilon \rightarrow 0}\underset{l\rightarrow 0}{\lim \inf }%
\frac{\log \mu (C_{\epsilon ,l})}{\log l}.
\end{eqnarray}
\end{remark}

We will prove now a complementary result that deals with Poincar\'{e} maps
of a flow. With this result we can obtain the logarithm law for the flow,
provided we establish it for the induced Poincar\'{e} map on a section and
the return time is integrable. This will not be used in what follows, the
uninterested reader can jump to the next section.

Let $X$ be a metric space, and $\Phi ^{t}$ be a measure preserving flow and $%
\Sigma $ be a section of $(X,\Phi ^{t})$ ; we will show that if the flow is
ergodic and the return time is integrable, then the hitting time scaling
behavior of the flow can be estimated by the one of the system induced on
the section. Hence we can have a logarithm law for the flow if we can prove
it on the section (with the induced return map).

Given any $x\in X$ let us denote with $t(x)$ the smallest strictly positive
time such that $\Phi ^{t(x)}(x)\in \Sigma $. Let also consider $t^{\prime
}(x)$, the smallest non negative time such that $\Phi ^{t^{\prime
}(x)}(x)\in \Sigma $. We remark that these two times differ on the section $%
\Sigma $, where $t^{\prime }=0$ while $t$ is the return time to the section.
We define $\pi :X\rightarrow \Sigma $ as $\pi (x)=\Phi ^{t^{\prime }(x)}(x)$%
, the projection on $\Sigma $. Let us also denote by $\mu _{F}$ the
invariant measure for the Poincar\'{e} map $F$ which is induced by the
invariant measure $\mu $ of the flow.

\begin{proposition}
\label{propsec}Let us suppose that the flow $\Phi ^{t}$ is ergodic and has a
section $\Sigma $ with an induced map $F$ and invariant measure $\mu _{F}$
such that $\int_{\Sigma }t(x)~d\mu _{F}<\infty $. Let $r\geq 0$ and $%
S_{r}\subseteq \Sigma $ be a decreasing family of measurable subsets with $%
\lim_{r\rightarrow 0}\mu _{F}(S_{r})=0$. Let us consider the hitting time
relative to the Poincar\'{e} map 
\begin{equation}
\tau ^{\Sigma }(x,S_{r})=\min \{n\in \mathbb{N}^{+};F^{n}(x)\in S_{r}\}.
\end{equation}

There is a full measure set $C\subseteq X$ such that if $x\in C$%
\begin{equation}
\lim_{r\rightarrow 0}\frac{\log \tau (x,S_{r})}{-\log r}=\lim_{r\rightarrow
0}\frac{\log \tau ^{\Sigma }(\pi (x),S_{r})}{-\log r}.
\end{equation}
\end{proposition}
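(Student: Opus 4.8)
The plan is to relate the two hitting times $\tau(x,S_r)$ (continuous flow time needed to reach $S_r$, viewed as a target inside $\Sigma$) and $\tau^{\Sigma}(\pi(x),S_r)$ (number of returns to $\Sigma$ needed) by controlling the total flow time elapsed during a given number of returns. Write $t(x)$ for the first return time and, for $y\in\Sigma$, let $\sigma_n(y)=\sum_{k=0}^{n-1} t(F^k(y))$ be the time the flow takes to perform $n$ returns to $\Sigma$ starting at $y$. The elementary geometric relation is
\begin{equation*}
\sigma_{\tau^{\Sigma}(\pi(x),S_r)-1}(\pi(x)) \;+\; t'(x)\;\le\; \tau(x,S_r)\;\le\;\sigma_{\tau^{\Sigma}(\pi(x),S_r)}(\pi(x))\;+\;t'(x),
\end{equation*}
since hitting $S_r\subseteq\Sigma$ along the flow is the same as hitting it along the discrete orbit $\pi(x),F(\pi(x)),\dots$, and the flow time between the $(k-1)$-st and $k$-th visit to $\Sigma$ is exactly $t(F^{k-1}(\pi(x)))$. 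Taking logarithms and dividing by $-\log r$, the theorem reduces to showing that
\begin{equation*}
\lim_{r\to 0}\frac{\log \sigma_{\tau^{\Sigma}(\pi(x),S_r)}(\pi(x))}{-\log r}=\lim_{r\to 0}\frac{\log \tau^{\Sigma}(\pi(x),S_r)}{-\log r},
\end{equation*}
i.e. that the random multiplicative factor $\sigma_n(y)/n$ is subexponential along the relevant subsequence $n=\tau^{\Sigma}(\pi(x),S_r)$.

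The key analytic input is the integrability hypothesis $\int_{\Sigma} t\,d\mu_F<\infty$ together with ergodicity of $F$ (which follows from ergodicity of the flow on the complement of the orbit of $\Sigma$, a full measure set). First I would apply Birkhoff's ergodic theorem to the function $t$ on $(\Sigma,F,\mu_F)$: for $\mu_F$-a.e.\ $y$, $\frac1n\sigma_n(y)\to \bar t:=\int_\Sigma t\,d\mu_F$, hence $\log\sigma_n(y)=\log n+o(\log n)$ — in fact $\log\sigma_n(y)-\log n\to\log\bar t$. Since $\tau^{\Sigma}(\pi(x),S_r)\to\infty$ as $r\to0$ (because $\mu_F(S_r)\to0$ and, on the full measure set where the orbit is not eventually trapped in $\bigcap_r S_r$, the discrete hitting times diverge), we may substitute $n=\tau^{\Sigma}(\pi(x),S_r)$ into this asymptotic, which gives exactly the displayed identity above. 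The bounded perturbation $t'(x)\le t(x)<\infty$ (finite a.e.) is negligible after dividing by $-\log r\to\infty$.

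Finally I would assemble the pieces: push the sandwich inequality through $\log(\cdot)/(-\log r)$, use that $\log(\tau^\Sigma-1)$ and $\log\tau^\Sigma$ have the same asymptotics, and conclude that on the intersection $C$ of the three full-measure sets (ergodicity/Birkhoff set for $t$; the set where $\pi(x)$ is a Birkhoff-typical point; and the set where the orbit is not eventually inside $\bigcap_r S_r$) the two normalized limits coincide whenever either one exists, and otherwise the $\limsup$'s and $\liminf$'s match pairwise. I expect the main subtlety to be the justification that one may replace the deterministic index $n$ by the data-dependent index $\tau^{\Sigma}(\pi(x),S_r)$ in the Birkhoff asymptotic: this is clean here only because the relation $\sigma_n(y)/n\to\bar t$ holds for \emph{all} large $n$ simultaneously (a property of the single point $y=\pi(x)$), not merely in measure, so no uniformity over targets is needed — but this point is worth stating carefully. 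A secondary nuisance is handling the degenerate case $\mu_F(\bigcap_r S_r)>0$ exactly as in the proof of Theorem \ref{GeneralResult}, where both sides are eventually $0$ after normalization.
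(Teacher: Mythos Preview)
Your proof is correct and follows essentially the same approach as the paper: relate $\tau(x,S_r)$ to the Birkhoff sum $\sigma_{\tau^{\Sigma}}(\pi(x))$ of the return time, apply Birkhoff's theorem to $t$ on $(\Sigma,F,\mu_F)$ to get $\sigma_n/n\to\bar t$, use $\tau^{\Sigma}(\pi(x),S_r)\to\infty$ to substitute the data-dependent index, and extract logarithms. One minor remark: the ``degenerate case'' $\mu_F(\bigcap_r S_r)>0$ you flag cannot occur, since the standing hypothesis $\mu_F(S_r)\to 0$ already forces $\mu_F(\bigcap_r S_r)=0$, so there is nothing extra to handle there.
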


\begin{proof}
Since $(X,\Phi ^{t},\mu )$ is ergodic, then $(\Sigma ,F,\mu _{F})$ is
ergodic. Let $S_0=\bigcap_{r>0}S_r$, by hypothesis $\mu _{F}(S_{0})=0$ and
the set $A^{\prime }=\Sigma -\cup _{i\geq 0}F^{-i}(S_{0})$ has full measure.
Let us consider the set $A^{\prime \prime }\subset \Sigma $ where $\forall
x\in A^{\prime \prime }$ 
\begin{equation}
\frac{1}{n}\sum_{i=0}^{n}t(F^{i}(x))\longrightarrow \int_{\Sigma }t(x)d\mu
_{F}.
\end{equation}%
By the pointwise ergodic theorem this set has full measure. Now let us set $%
A=A^{\prime }\cap A^{\prime \prime }$.

Let us first assume that $x$ $\in $ $A\subseteq \Sigma $. Then $\tau
^{\Sigma }(x,S_{r})$ and $\tau (x,S_{r})$ are related by 
\begin{equation}
\tau (x,S_{r})=\sum_{i=0}^{\tau ^{\Sigma }(x,S_{r})}t(F^{i}(x)).
\label{sum1}
\end{equation}%
If $x\in A$ (recalling that since $x\in A^{\prime }$ then $\tau ^{\Sigma
}(x,S_{r})\rightarrow \infty $ as $r\rightarrow 0$) then 
\begin{equation*}
\ \frac{1}{\tau ^{\Sigma }(x,S_{r})}\sum_{i=0}^{\tau _{r}^{\Sigma
}(x,S_{r})}t(F^{i}(x))\longrightarrow \int_{\Sigma }t(x)~d\mu _{F}.
\end{equation*}%
Thus%
\begin{equation}
\tau (x,S_{r})=\sum_{i=0}^{\tau _{r}^{\Sigma
}(x,S_{r})}t(F^{i}(x))=c(x,r)\cdot \tau ^{\Sigma }(x,S_{r})\cdot
\int_{\Sigma }t(x)~d\mu _{F}
\end{equation}%
with $c(x,r)\rightarrow 1$ as $r\rightarrow 0$.\ The same is also true for
each $x\in \pi ^{-1}(A)$ which is a full measure set. Extracting logarithms
and taking the limits, we get the required result.
\end{proof}

By Proposition \ref{maine}, in the case where the induced map is
superpolynomially mixing this implies the following

\begin{corollary}
Under the above assumptions suppose that the induced system on $\Sigma $ has
superpolynomial decay of correlations with respect to Lipschitz observables.
Let $f:\Sigma \rightarrow \mathbb{R}^{+}$ be a Lipschitz function and let%
\begin{equation}
B_{l}=\{x\in \Sigma ,f(x)<l\}.
\end{equation}%
Suppose that the following limit exists%
\begin{equation}
d=\lim_{l\rightarrow 0}\frac{\log \mu (B_{l})}{\log l}
\end{equation}%
($\mu $ is the invariant measure induced on the section) then there is a
full measure set $C\subseteq X$ such that if $x\in C$.%
\begin{equation}
\lim_{l\rightarrow 0}\frac{\log \tau (x,B_{l})}{-\log l}=d.
\end{equation}
\end{corollary}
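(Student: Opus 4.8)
The plan is to combine the two preceding results in the obvious way. First I would note that the section $\Sigma$ with its induced map $F$ and invariant measure $\mu_F$ is a measure preserving system, and by hypothesis it has superpolynomial decay of correlations with respect to Lipschitz observables. The target sets $B_l=\{x\in\Sigma,f(x)<l\}$ are sublevel sets of a Lipschitz function, which is exactly the setting of Proposition \ref{maine}. Moreover, since $\lim_{l\to 0}\frac{\log\mu(B_l)}{\log l}=d$ exists, we are in the case where the dimension $d(f)$ is defined, so Proposition \ref{maine} (Equation \ref{lla}) applies and gives, for $\mu_F$-almost every $x\in\Sigma$,
\begin{equation*}
\lim_{l\rightarrow 0}\frac{\log \tau ^{\Sigma }(x,B_{l})}{-\log l}=d.
\end{equation*}

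Second, I would check that the hypotheses of Proposition \ref{propsec} are met so that the hitting time scaling transfers from the induced map to the flow. The flow is ergodic; the return time is integrable by assumption; and $B_l$ is a decreasing family with $\lim_{l\to 0}\mu_F(B_l)=0$ — the latter follows from the existence of $d$, since if $\mu_F(B_l)$ did not tend to $0$ the ratio $\frac{\log\mu(B_l)}{\log l}$ would tend to $0$, forcing $d=0$, and moreover $B_0=\bigcap_{l>0}B_l$ would have positive measure; in that degenerate case one argues directly as in the proof of Proposition \ref{propsec} that both sides of the claimed equality are $0$ almost everywhere, so we may assume $\mu_F(B_0)=0$. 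Then Proposition \ref{propsec} furnishes a full measure set $C\subseteq X$ on which
\begin{equation*}
\lim_{r\rightarrow 0}\frac{\log \tau (x,B_{r})}{-\log r}=\lim_{r\rightarrow 0}\frac{\log \tau ^{\Sigma }(\pi (x),B_{r})}{-\log r}.
\end{equation*}

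Finally I would splice the two statements. The full measure set from Proposition \ref{maine} pulls back under $\pi$ to a full measure subset of $X$ (as in the proof of Proposition \ref{propsec}, using that $\mu_F$ is the measure induced by $\mu$ and that $\pi$ is measure-respecting in the appropriate sense); intersecting it with $C$ gives a full measure set on which both displayed equalities hold, and chaining them yields $\lim_{l\to 0}\frac{\log\tau(x,B_l)}{-\log l}=d$, as required.

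I do not expect a genuine obstacle here; the statement is essentially a corollary assembled from Proposition \ref{maine} and Proposition \ref{propsec}. The only point requiring a little care is the bookkeeping between the $\mu_F$-almost-everywhere conclusion on $\Sigma$ and the $\mu$-almost-everywhere conclusion on $X$: one must verify that the exceptional null set on the section lifts to a null set for the flow, which is standard given that $\mu$ disintegrates over $\mu_F$ along the flow direction with the integrable return time, and that the degenerate case $\mu_F(B_0)>0$ is disposed of separately exactly as in the proof of Proposition \ref{propsec}.
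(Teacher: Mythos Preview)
Your proposal is correct and matches the paper's approach exactly: the paper presents this as an immediate corollary of Proposition~\ref{maine} applied to the induced system together with Proposition~\ref{propsec}, without even giving a separate proof. Your care with the degenerate case $\mu_F(B_0)>0$ and with lifting the $\mu_F$-null exceptional set to a $\mu$-null set is appropriate and does not diverge from the intended argument.
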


\section{Decay of correlations for contact Anosov flows}

We recall some basic definition and some properties of contact Anosov flow.
These will be used when dealing with the geodesic flow of a negatively
curved manifold.

\begin{definition}
Let $M$ be a $2n+1$-dimensional manifold. If $M$ admits a $1$-form $\alpha$
such that $\alpha\wedge(d\alpha)^n\neq 0$, then $M$ is said to be a \textbf{%
contact manifold}. The hyperplane distribution $\ker\alpha$ is called the 
\textbf{contact distribution}.
\end{definition}

\begin{definition}
Given a flow $\Phi ^{t}$ on $(M,\alpha )$, we call it a \textbf{contact flow}
if it preserves the contact form, i.e. $\alpha(d\Phi ^{t} v) =\alpha(v)$.
Moreover if its infinitesimal generator $V$ is such that $\alpha (V)=1$ and $%
d\alpha (V,v)=0$ for every vector field $v$ we will call it a \textbf{Reeb
vector field}.
\end{definition}

The natural invariant measure for the flow is nothing else than the measure
induced by the volume form $\alpha \wedge (d\alpha )^{n}$. We will denote
this measure as $\mu $. We refer to \cite{Geiges} for the basic facts about
contact manifolds (and much more). A Reeb vector field, by definition, is
transverse to the contact distribution, i.e. $\ker(\alpha_x)\oplus V(x)= T_x
M$ for every $x$ in $M$.

\begin{definition}
A flow $\Phi $ is said to be \textbf{Anosov}, if at each point $x$ the
tangent space $T_{x}M$ \ can be split into a direct sum $T_{x}M=E_{x}^{u}%
\oplus E_{x}^{0}\oplus E_{x}^{s}$ respectively called unstable, central and
stable directions such that the splitting is invariant with respect to the
action of the flow, $E_{x}^{0}$ is one dimensional and coincides with the
direction of the flow and there are $A,B>0$ such that 
\begin{eqnarray*}
||d\Phi ^{t}(v)|| &\leq &Ae^{-Bt}||v||~for~each~v\in E_{x}^{s}~and~t\geq 0 \\
||d\Phi ^{t}(v)|| &\leq &Ae^{-Bt}||v||~for~each~v\in E_{x}^{u}~and~t\leq 0
\end{eqnarray*}
\end{definition}

In \cite{Li} the following nontrivial fact is proved.

\begin{theorem}[Exponential decay of correlations for contact Anosov flows]
\label{Liverani} Let $M$ be a $C^{4}$, $2d+1$ connected compact manifold and
let us consider be a $C^{4}$ flow $\Phi ^{t}:M\rightarrow M$ defined on it.
Suppose this flow is Anosov and that it preserves a contact form. Then,
there exist constants $C_{1},C_{2}$ such that for each $\alpha $-H\"{o}lder
functions $\varphi ,\psi $ on $M$, 
\begin{equation*}
\bigg|\int \phi \circ \Phi ^{t}\psi d\mu -\int \phi d\mu \int \psi d\mu %
\bigg|\leq C_{1}\left\vert \left\vert \phi \right\vert \right\vert _{\alpha
}\left\vert \left\vert \psi \right\vert \right\vert _{\alpha }e^{-C_{2}t}.
\end{equation*}
\end{theorem}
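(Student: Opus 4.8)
The plan is to prove exponential mixing via the transfer‑operator (functional‑analytic) approach: construct a scale of anisotropic Banach spaces of distributions adapted to the hyperbolic splitting, show that the generator of the flow has a spectral gap on them, upgrade the gap to a quantitative resolvent bound on vertical lines using the contact structure (a Dolgopyat‑type oscillatory‑integral estimate), and finally convert this into exponential decay of correlations by an inverse‑Laplace contour argument. Concretely, since $\Phi^t$ is $C^4$ Anosov with $T_xM = E^u_x \oplus E^0_x \oplus E^s_x$, one first builds spaces $\mathcal{B}^{p,q}$ (for suitable $0<p$, $q$ with $p+q<$ the regularity), sitting between $C^p$ and the dual of $C^{q'}$, whose elements behave like $C^p$ functions when integrated against test forms on unstable leaves and like order‑$q$ distributions in the transverse (stable) directions. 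Because $\mu$ is invariant, the transfer operators act by composition, $\mathcal{L}_t\varphi = \varphi\circ\Phi^{-t}$, and one considers the resolvent $R(z) = \int_0^\infty e^{-zt}\mathcal{L}_t\,dt = (z - X)^{-1}$ of the infinitesimal generator $X$. A Lasota–Yorke / Hennion-type inequality on $\mathcal{B}^{p,q}$ shows that, for $\Re z$ in a strip $\{\Re z > -\tau_0\}$, $R(z)$ is meromorphic with only isolated eigenvalues of finite multiplicity there.

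Next one identifies the peripheral spectrum. Invariance of the smooth contact volume gives that constants are fixed, and mixing of the flow (which holds for a topologically transitive contact Anosov flow, the contact condition ruling out the exceptional constant‑roof suspensions) forces $z=0$ to be a simple eigenvalue and the unique one on the imaginary axis; let $\Pi\varphi = \int\varphi\,d\mu$ be the associated rank‑one projector. At this stage one only has mixing, possibly slow, so the essential quantitative step is still missing.

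The heart of the proof is a uniform resolvent bound on vertical lines: for $a$ slightly negative and $|b|$ large, $\|R(a+ib)\|_{\mathcal{B}^{p,q}\to\mathcal{B}^{p,q}} \le C\,|b|^{-\beta}$ for some $\beta>0$ (or, after iteration, a uniform bound). This is where the contact hypothesis, not just the Anosov property, is used. After time $t\sim \log|b|$ the iterate of $R(a+ib)$ is an integral operator whose kernel carries an oscillatory factor $e^{ib\,\Delta(\cdot,\cdot)}$, where $\Delta$ is the temporal‑distance function measuring the shift along the flow between nearby points of stable and unstable manifolds. The stable and unstable foliations of a contact Anosov flow are not jointly integrable, and in fact $\Delta$, being essentially the $d\alpha$‑area swept between the leaves, is non‑degenerate in the directions transverse to the flow thanks to $\alpha\wedge(d\alpha)^n\neq 0$. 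This non‑degeneracy produces genuine cancellation in the oscillatory integral through a foliated non‑stationary‑phase estimate, so pieces of the iterated operator contract in norm instead of merely not expanding. I expect this step — making the oscillatory/non‑integrability estimate rigorous in the infinite‑dimensional setting with only Hölder foliations, uniformly in $b$ — to be by far the main obstacle; everything else is relatively standard spectral perturbation theory.

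Finally, given the resolvent bounds, for Hölder observables $\phi,\psi$ (which embed continuously into $\mathcal{B}^{p,q}$ and its predual, respectively) one writes
\[
\int \phi\circ\Phi^t\,\psi\,d\mu - \int\phi\,d\mu\int\psi\,d\mu = \int \phi\cdot(\mathcal{L}_t - \Pi)\psi\,d\mu,
\]
and inverts the Laplace transform:
\[
\mathcal{L}_t - \Pi = \frac{1}{2\pi i}\int_{a-i\infty}^{a+i\infty} e^{zt}\Big(R(z) - \tfrac{\Pi}{z}\Big)\,dz .
\]
Shifting the contour to $\Re z = -C_2 < 0$ — legitimate because Step 3 guarantees integrability on the shifted line and the absence of spectrum between the lines — yields $\|\mathcal{L}_t - \Pi\|_{\mathcal{B}^{p,q}\to\mathcal{B}^{p,q}} \le C_1 e^{-C_2 t}$, and pairing with $\phi$ and $\psi$ gives the asserted bound $C_1\|\phi\|_\alpha\|\psi\|_\alpha e^{-C_2 t}$.
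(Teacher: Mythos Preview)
The paper does not prove this theorem at all: it is quoted as a black box from Liverani's paper \cite{Li} (``In \cite{Li} the following nontrivial fact is proved''), and is used only as an input to Corollary~\ref{CurvaturaNegativa} and, ultimately, to verify the superpolynomial mixing hypothesis in Theorem~\ref{GeneralResult}. So there is no ``paper's own proof'' to compare against.

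That said, your sketch is a faithful high-level outline of the strategy in \cite{Li}: anisotropic Banach spaces adapted to the hyperbolic splitting, a Lasota--Yorke inequality giving quasi-compactness of the resolvent, identification of the peripheral spectrum, a Dolgopyat-type oscillatory cancellation estimate exploiting the non-joint-integrability forced by the contact structure to bound $\|R(a+ib)\|$ uniformly for large $|b|$, and an inverse Laplace/contour argument to convert this into exponential decay. You correctly flag the Dolgopyat step as the crux. What your outline does not supply are the genuinely hard technical ingredients of \cite{Li}: the precise construction of the spaces and the verification that H\"older functions embed with the right duality, the quantitative control of holonomies needed for the Lasota--Yorke inequality, and above all the rigorous non-stationary-phase argument for H\"older (not smooth) foliations, uniform in the frequency parameter. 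These are not details one can fill in routinely; they are the substance of Liverani's paper. As a proof \emph{proposal} for a result the present paper merely cites, your write-up is fine, but it should be understood as a roadmap to \cite{Li}, not a self-contained argument.
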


In the above equation $\left\vert \left\vert .\right\vert \right\vert
_{\alpha }$ represents the H\"{o}lder norm. Given a Riemannian manifold, the
geodesic flow on the unit tangent bundle is a Contact Flow and its
infinitesimal generator is a Reeb vector field; moreover, such a contact
flow is Anosov in the case of Riemannian manifolds with negative sectional
curvature (see \cite{KatokHasselblat}, ch 17.6 ). Altogether, this simply
means that the following holds.

\begin{corollary}
\label{CurvaturaNegativa} The geodesic flow $\Phi ^{t}$ of a $C^{4}$
compact, connected manifold with strictly negative sectional curvature is
exponentially mixing with respect to H\"{o}lder observables as above.
\end{corollary}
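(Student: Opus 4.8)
The plan is to verify that the geodesic flow of such a manifold satisfies every hypothesis of Theorem \ref{Liverani}, and then simply quote that theorem. Write $N$ for the base manifold, $n=\dim N$, and $M=T^{1}N$ for its unit tangent bundle; then $M$ is a compact connected manifold of dimension $2n-1=2(n-1)+1$, so with $d=n-1$ we are in the $2d+1$-dimensional setting of Theorem \ref{Liverani}, and if the metric on $N$ is smooth enough then $M$ and the geodesic flow on it are $C^{4}$.

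First I would recall the canonical contact structure on $M$. On $TN$ one has the Liouville $1$-form $\theta_{v}(\xi)=\langle v,d\pi(\xi)\rangle$, where $\pi:TN\to N$ is the projection and $\langle\cdot,\cdot\rangle$ is the Riemannian metric; its restriction $\alpha=\theta|_{M}$ to the unit tangent bundle satisfies $\alpha\wedge(d\alpha)^{d}\neq 0$, so $(M,\alpha)$ is a contact manifold, and the volume form $\alpha\wedge(d\alpha)^{d}$ induces (a constant multiple of) the Liouville measure $\mu$. It is classical that the geodesic vector field $V$ on $M$ (the geodesic spray restricted to the unit bundle) is precisely the Reeb vector field of $\alpha$: one checks $\alpha(V)=\langle v,v\rangle=1$ on $M$, and $\iota_{V}d\alpha=0$, so in particular $\mathcal{L}_{V}\alpha=0$ and the geodesic flow $\Phi^{t}$ preserves both $\alpha$ and $\mu$. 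These are standard facts, which can be cited from \cite{KatokHasselblat} or \cite{Geiges}.

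Next I would invoke the classical theorem of Anosov: for a compact Riemannian manifold of strictly negative sectional curvature, the geodesic flow on $M=T^{1}N$ is Anosov, the stable and unstable subbundles being spanned by the stable and unstable Jacobi fields and the one-dimensional central bundle being $\mathbb{R}V$; the uniform exponential contraction and expansion rates follow by comparison with the constant negative curvature model. This is exactly the content of \cite[ch.~17.6]{KatokHasselblat}. Combining the two previous paragraphs, $\Phi^{t}$ is a $C^{4}$ Anosov flow on the $C^{4}$ compact connected manifold $M$ preserving the contact form $\alpha$, so Theorem \ref{Liverani} applies verbatim and gives, with $\mu$ the Liouville measure and $\|\cdot\|_{\alpha}$ the H\"older norm, the asserted exponential decay of correlations for H\"older observables.

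The statement is thus a direct specialization of Theorem \ref{Liverani}, and there is no genuine obstacle; the only point requiring mild care is the regularity bookkeeping, namely making sure the smoothness of the metric is enough that the geodesic spray — which involves one derivative of the metric through the Christoffel symbols — and hence the flow is $C^{4}$, so that the hypotheses of Theorem \ref{Liverani} are met exactly.
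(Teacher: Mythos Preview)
Your proposal is correct and follows exactly the route the paper takes: the paper simply observes that the geodesic flow on $T^{1}N$ is a contact flow whose generator is the Reeb field, that negative sectional curvature makes it Anosov (citing \cite{KatokHasselblat}, ch.~17.6), and then invokes Theorem~\ref{Liverani}. Your write-up just fills in these standard facts in more detail; the regularity remark at the end is a sensible caveat the paper leaves implicit.
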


\section{Logarithm law for the geodesic flow}

Let $M$ be an hyperbolic manifold (with constant negative curvature) of
dimension $n$ and $T^{1}M$ be its unitary tangent bundle. Let $\pi
_{1}:T^{1}M\rightarrow M$ be the canonical projection, $\Phi ^{t}$ be the
geodesic flow on $T^{1}M$, $\mu $ the Liouville measure on $T^{1}M$ , and $d$
the Riemannian distance on $M.$ In \cite{Maucourant} the following result is
proved.

\begin{theorem}[Maucourant]
\label{mau}For all $p\in M$ and $\mu $ almost each $v\in T^{1}M$%
\begin{equation}
\underset{t\rightarrow \infty }{\lim \sup }\frac{-\log d(p,\pi (\Phi ^{t}v))%
}{\log t}=\frac{1}{n-1}.  \label{maueq}
\end{equation}
\end{theorem}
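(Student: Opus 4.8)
The plan is to deduce Maucourant's theorem (Theorem~\ref{mau}) from the general hitting-time result, Theorem~\ref{GeneralResult}, applied to the geodesic flow on $T^1M$, which is exponentially (hence superpolynomially) mixing by Corollary~\ref{CurvaturaNegativa}. The idea is that the $\limsup$ in \eqref{maueq} records how fast typical geodesics return to arbitrarily small balls around $p$, and this is exactly governed by the hitting-time asymptotics of the shrinking targets $B_l$. So first I would translate the statement: fix $p\in M$ and set $f:T^1M\to\mathbb{R}^+$ by $f(v)=d(p,\pi_1(v))$, which is Lipschitz, and let $B_l=\{v\in T^1M: f(v)<l\}=\pi_1^{-1}(B(p,l))$. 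The key elementary observation is that $d(p,\pi_1(\Phi^t v))\ge l$ fails for a sequence $t_i\to\infty$ iff the geodesic keeps hitting $B_l$, and more precisely the relation
\begin{equation*}
\underset{t\rightarrow\infty}{\lim\sup}\frac{-\log d(p,\pi_1(\Phi^t v))}{\log t}=\underset{l\rightarrow 0}{\lim\sup}\frac{\log\tau(\Phi^{t^*}v,B_l)}{-\log l}
\end{equation*}
for a suitable starting point, or rather, using invariance and ergodicity, that the $\limsup$ on the left equals $\underset{l\to 0}{\lim}\frac{\log\tau(v,B_l)}{-\log l}$ for a.e.\ $v$ once the latter limit exists. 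This is a standard ``Borel--Cantelli / first return'' type manipulation; I would phrase it so that a lower bound $d\le 1/(n-1)$ on the hitting exponent gives the $\ge$ direction in \eqref{maueq} along a sequence, and the upper bound gives $\le$.

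Next I would verify the hypotheses of Theorem~\ref{GeneralResult}, for which the transverse submanifold $V$ does not literally appear here — instead we are in the simpler situation of a target defined on all of $M$ via the fibration, so I would instead invoke it through the set $C_{\epsilon,l}=\{x: \Phi^t(x)\in B_l \text{ for some } 0\le t<\epsilon\}$, or more directly reduce to Proposition~\ref{maine} for the time-$\epsilon$ map as in the proof of Theorem~\ref{GeneralResult}. The real content is the computation of the ``conditional dimension''
\begin{equation*}
d=\lim_{\epsilon\rightarrow 0}\lim_{l\rightarrow 0}\frac{\log\mu(C_{\epsilon,l})}{\log l}.
\end{equation*}
Since $\Phi^t$ moves points along geodesics transversally to the fibers for small $t$, one has $\mu(C_{\epsilon,l})\asymp \epsilon\cdot(\text{measure of a codimension-}1\text{ slice of }B_l)$, and because $\mu$ is the Liouville measure (locally a product of Lebesgue on the $n$-dimensional base with the rotation-invariant measure on the $(n-1)$-sphere fiber), we get $\mu(B_l)\asymp l^n$ and the $\epsilon$-collar of the slice has measure $\asymp \epsilon\, l^{n-1}$. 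Hence $d=n-1$. The $\log$ then converts $\tau(v,B_l)\approx l^{-(n-1)}$ into $d(p,\pi_1(\Phi^{t}v))\gtrsim t^{-1/(n-1)}$ for the locally-maximal-distance times, which is exactly \eqref{maueq}.

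I expect the main obstacle to be the bookkeeping in the elementary translation between the $\limsup$ over continuous time $t$ of $-\log d(p,\pi_1(\Phi^t v))/\log t$ and the $\limsup$ over the discrete parameter $l$ of $\log\tau(v,B_l)/(-\log l)$: one must be careful that ``reaching a locally maximal distance'' corresponds to the last exit before the hitting time, handle the two-sided $\epsilon$-neighbourhoods so that hitting a slightly fattened target is comparable to hitting $B_l$, and use ergodicity to pass from a particular orbit to $\mu$-a.e.\ $v$. A secondary, but purely computational, point is justifying the estimate $\mu(C_{\epsilon,l})\asymp \epsilon l^{n-1}$ uniformly as $l\to 0$ with $\epsilon$ fixed small; this uses compactness of $M$, the flow box theorem near the (compact) closure of $B_l$, and the explicit product structure of the Liouville measure, so it is routine but needs the constant-curvature (or at least smooth-sphere-bundle) geometry to be spelled out. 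Once $d=n-1$ is in hand, \eqref{maueq} follows by applying \eqref{general} and taking logarithms.
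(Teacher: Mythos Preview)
Your overall strategy---reduce \eqref{maueq} to a hitting-time statement, compute the ``conditional dimension'' $d=n-1$, and then translate back---is exactly the route the paper takes. Note, however, that the paper does not prove Theorem~\ref{mau} as such: it is quoted from \cite{Maucourant}, and the paper instead proves the variable-curvature generalization (Theorem~\ref{loglaw3} followed by Proposition~\ref{loglaw4}), which specializes to \eqref{maueq} in constant curvature. Your remark that one ``needs the constant-curvature \dots\ geometry'' is therefore too cautious: the whole point of the paper's argument is that it works in variable negative curvature.

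There is one genuine gap in your plan. You want to apply Theorem~\ref{GeneralResult} (or its proof) directly with $B_l=\pi_1^{-1}(B(p,l))$, observing that ``the transverse submanifold $V$ does not literally appear here.'' But Theorem~\ref{GeneralResult} is stated for targets $B_l$ lying \emph{inside} a fixed transverse hypersurface $V$, and its proof builds the Lipschitz function $\overline{F}$ (whose sublevels are the $C_{\epsilon,l}$) using the flow-box projection $\mathrm{pr}:D\to V$. Your $B_l$ is an open subset of $T^1M$, not of a codimension-one section, so neither the theorem nor its proof applies verbatim. The paper handles this by constructing an explicit transverse section
\[
T(p)=\{(x,v)\in T^1M|_U:\textstyle\sum g_{ij}(x)x^iv^j=0\},
\]
proving it is a submanifold transverse to the flow (Lemma~\ref{Lemma:Transverse} and the following lemma), applying Theorem~\ref{GeneralResult} to the targets $T_l(p)\subset T(p)$, and only at the end comparing $\tau(x,T_r(p))$ with $\tau(x,U_r(p))$ via the sandwich $\tau(x,T_{2r}(p))-r\le\tau(x,U_r(p))\le\tau(x,T_r(p))$. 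You either need to reproduce this construction, or else make explicit the alternative you allude to: show directly that $G(x)=\inf_{0\le t\le\epsilon}d(p,\pi_1(\Phi^t x))$ is Lipschitz on $T^1M$ and that its sublevels are exactly the $C_{\epsilon,l}$, so that Proposition~\ref{maine} applies to the time-$\epsilon$ map without any section. That is a legitimate shortcut, but it is not in the paper and it is the step your sketch leaves unjustified.

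The measure computation $\mu(C_{\epsilon,l})\asymp\epsilon\,l^{n-1}$ and the final conversion from the hitting-time limit to the $\limsup$ in \eqref{maueq} are carried out in the paper (the latter is Proposition~\ref{loglaw4}, writing $d_t=t^{-\alpha(t)}$ and bounding $\alpha(t)$), essentially as you describe.
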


We now apply Theorem \ref{GeneralResult} to obtain a version of Theorem \ref%
{mau} for compact manifolds with \emph{variable }negative sectional
curvature.

We remark that results of this kind are also stated in $CAT(-1)$ spaces, and
proved with completely different techniques (\cite{HP}).

To apply Theorem \ref{GeneralResult}, we have to find suitable transversal
sections to the flow, which can take account of the intersection of a
geodesics with the ball centered at the target point on the base manifold.
We fix now some notation that is used throughout the section; again denote
by $M$ a Riemannian manifold with metric $g$ and negative sectional
curvature, by $\pi :TM\rightarrow M$ its tangent bundle, by $\pi
_{1}:T^{1}M\rightarrow M$ its unitary tangent bundle. A coordinate
neighborhood of $M$, $(U,x^{1},\ldots ,x^{n})$ is centered in a point $p$
when $x^{1}(p)=x^{2}(p)=\ldots =x^{n}(p)=0$, fixed a coordinate neighborhood
centered at $p$ it is understood that we will denote the point $p$ also as
the point $0$. When we deal with a point $q$ on the tangent bundle $TM$, and
we restrict ourselves to a trivializing neighborhood, we shall use without
any remark the notation $(x,v)$ to denote its coordinates in the local
trivialization. Another assumption we make is that, when we choose a
trivializing neighborhood for $TM$, we choose the coordinate system so that
the contact form $\alpha $ is written in coordinates as $%
\sum_{i,j=1}^{n}g_{ij}(x)v^{i}dx^{j}$. The existence of such a coordinate
system is a well known result and can be found e.g. in \cite{KatokHasselblat}%
.

The following estimation of the hitting time of the geodesic flow will imply
an equation similar to $($\ref{maueq}$)$ (and some even more precise result).

\begin{theorem}
\label{loglaw3}Let $\Phi ^{t}$ be the geodesic flow on the unitary tangent
bundle of $M$, $n$-dimensional compact, connected manifold with negative
sectional curvature. Let us fix $p\in M$ and let $f:T^{1}M\rightarrow 
\mathbb{R}$ be given by $f(q)=d(\pi _{1}(q),p)$, where $d$ is the Riemannian
distance on $M$. Let us denote 
\begin{equation*}
U_{r}(p)=\{x\in T^{1}M,f(x)<r\}
\end{equation*}%
and by $\tau (x,U_{r}(p))$ as before, the time needed for a point $x$ in $%
T^{1}M$ to reach $U_{r}(p)$ under the action of the flow. We have that for
almost every point $x$ in $T^{1}M$ 
\begin{equation*}
\lim_{r\rightarrow 0}\frac{\log (\tau (x,U_{r}(p)))}{-\log (r)}=n-1.
\end{equation*}
\end{theorem}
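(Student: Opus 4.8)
The plan is to apply Theorem~\ref{GeneralResult} to the geodesic flow (which is exponentially, hence superpolynomially, mixing by Corollary~\ref{CurvaturaNegativa}), with the target sets $U_r(p)$ realized as the $\epsilon$-thickenings of suitable transverse sections. The main computation reduces to showing that the relevant conditional dimension exponent equals $n-1$, i.e.
\begin{equation*}
d=\lim_{\epsilon\rightarrow 0}\lim_{r\rightarrow 0}\frac{\log\mu(C_{\epsilon,r})}{\log r}=n-1 .
\end{equation*}

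\textbf{Step 1: Setting up the sections.} Fix a small geodesic sphere or, better, a hyperplane-type section $V$ through $p$ transverse to the flow direction; concretely, working in a coordinate neighborhood centered at $p$ with the contact form written as $\sum g_{ij}(x)v^i dx^j$, I would take $V$ to consist of unit vectors based at points of a fixed codimension-one submanifold of $M$ near $p$, chosen transverse to the flow. The function $f(q)=d(\pi_1(q),p)$ restricted to $V$ is Lipschitz and its sublevel sets $B_r=\{x\in V: f(x)<r\}$ are bounded. Then $C_{\epsilon,r}=\{x: \Phi^t(x)\in B_r,\ 0\le t<\epsilon\}$ is the set of unit vectors that will hit the $r$-ball around $p$ (in the section sense) within time $\epsilon$.

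\textbf{Step 2: Estimating $\mu(C_{\epsilon,r})$.} This is the heart of the argument. I would estimate the Liouville measure of $C_{\epsilon,r}$ by a flow-box / Fubini decomposition: $C_{\epsilon,r}$ is (up to bounded multiplicative distortion coming from the flow box theorem, valid for $\epsilon$ small) a product of an interval of length $\sim\epsilon$ in the flow direction with $B_r\subset V$, so $\mu(C_{\epsilon,r})\asymp \epsilon\cdot \nu(B_r)$ where $\nu$ is the induced measure on $V$. Now $B_r$ is, roughly, the set of unit vectors $v$ whose basepoint lies within distance $O(r)$ of $p$ \emph{and} whose direction points (within angle $O(1)$) toward the ball — but because we are looking at a transverse section and then thickening, the basepoint constraint is the codimension-one condition "distance to $p$ along the section $<r$" rather than a full $n$-dimensional ball. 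The key point is that the fibers $S^{n-1}$ of directions contribute full (bounded) measure, the section direction transverse to $p$ contributes a factor $\sim r$, but crucially the geodesics entering $U_r(p)$ form a set whose measure scales like $r^{n-1}$: intuitively, to pass through an $r$-ball in an $n$-manifold a geodesic must have its $(n-1)$ "impact parameter" coordinates all of size $O(r)$, while the remaining coordinate (roughly, time / position along the geodesic) ranges freely. So $\mu(U_r(p))\asymp r^{n-1}$ and, consistently, $\mu(C_{\epsilon,r})\asymp \epsilon\, r^{n-1}$ for fixed small $\epsilon$. Taking $\log$ and dividing by $\log r$, the $\epsilon$ and the constants drop out in the limit, giving $d=n-1$.

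\textbf{Step 3: Verifying hypotheses and concluding.} I must check: (i) $f$ is Lipschitz on $T^1M$ — clear, since $d(\cdot,p)$ is $1$-Lipschitz on $M$ and $\pi_1$ is Lipschitz; (ii) $B_l$ bounded for some $l$ — automatic since $M$ is compact; (iii) the inner and outer limits defining $d$ exist — the outer limit always exists (footnote in Theorem~\ref{GeneralResult}), and the inner limit exists and equals $n-1$ by Step 2 together with the asymptotic $\mu(U_r(p))\sim c\, r^{n-1}$, which follows from smoothness of the Liouville measure and of the exponential map (the volume of a small ball in $M$ is $\sim \omega_n r^n$ and the extra integration over directions/section is regular). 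Then Theorem~\ref{GeneralResult} yields $\lim_{r\to 0}\frac{\log\tau(x,B_r)}{-\log r}=n-1$ for $\mu$-a.e.\ $x$. Finally I note $U_r(p)$ and $B_r$ differ only by whether one looks at the thickened section or the ball itself — more precisely $\tau(x,U_r(p))$ and $\tau(x,B_r)$ differ by at most an additive $O(\epsilon)$ constant and a bounded factor, so the $\log$-scaling is unchanged, giving the stated result for $\tau(x,U_r(p))$.

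\textbf{Main obstacle.} The delicate step is Step 2: making rigorous the claim $\mu(C_{\epsilon,r})\asymp \epsilon\, r^{n-1}$ with explicit control of the distortion constants uniformly in $r$ (for fixed small $\epsilon$), in variable curvature. One must choose the transverse section $V$ carefully so that the projection $\mathrm{pr}$ appearing in Theorem~\ref{GeneralResult} is genuinely Lipschitz, invoke the flow box theorem on a neighborhood of the compact set $\overline{B_l}$, and track how the Liouville volume form $\alpha\wedge(d\alpha)^{n-1}$ disintegrates; the curvature being variable means one cannot use explicit hyperbolic formulas as in Maucourant, so all estimates must be soft (compactness + smoothness) rather than exact, which is exactly why only the $\log$-asymptotics, and not a sharper statement, is obtained.
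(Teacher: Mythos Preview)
Your overall strategy matches the paper's: apply Theorem~\ref{GeneralResult} to a transverse section, compute the exponent $d=n-1$ via a flow-box estimate of $\mu(C_{\epsilon,r})$, and then compare the section hitting time with $\tau(x,U_r(p))$. The volume heuristic in Step~2 (``$n-1$ impact parameters of size $O(r)$'') is exactly right, and Step~3's checklist is fine.

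The genuine gap is in Step~1. Your proposed section --- all unit vectors based at points of a fixed codimension-one submanifold $\Sigma\subset M$ through $p$ --- is \emph{never} transverse to the geodesic flow: at any $(x,v)$ with $v\in T_x\Sigma$ (and such vectors always exist in each fibre) the flow direction is tangent to $\pi_1^{-1}(\Sigma)$. So Theorem~\ref{GeneralResult} does not apply to this $V$, and the flow-box/Lipschitz projection you need in Step~2 breaks down precisely at those points. Moreover, with this $V$ the link between $\tau(x,B_r)$ and $\tau(x,U_r(p))$ is unclear: a geodesic can enter the ball $B_r(p)\subset M$ while staying on one side of $\Sigma$, so hitting $U_r(p)$ need not imply hitting $B_r$ within $O(\epsilon)$.

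The paper fixes this by choosing a section that depends on \emph{both} position and direction: in coordinates centered at $p$, it takes
\[
T(p)=\Bigl\{(x,v)\in T^1M:\ \sum_{i,j}g_{ij}(x)\,x^i v^j=0\Bigr\}.
\]
At the fibre over $p$ (where $x=0$) the defining differential coincides with the contact form $\alpha$, so the tangent space to $T(p)$ there is the contact distribution, automatically transverse to the Reeb (geodesic) field; by compactness of the fibre this persists on a neighbourhood. Geometrically $T(p)$ is (to first order) the locus where a geodesic is at its closest approach to $p$, and one checks $\pi_1(T_l(p))$ is the full geodesic $l$-ball. This is what produces the clean sandwich
\[
\tau(x,T_{2r}(p))-r\ \le\ \tau(x,U_r(p))\ \le\ \tau(x,T_r(p)),
\]
replacing your vague ``differ by $O(\epsilon)$'' claim. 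Once this section is in place, the explicit pull-back of $\alpha\wedge(d\alpha)^{n-1}$ in flow-box coordinates gives $\mu(C_{\epsilon,l})=K\epsilon\,l^{n-1}+\text{h.o.t.}$, confirming your Step~2 heuristic. In short: your outline is correct, but the construction of the transverse section is the main geometric content of the proof, not a routine choice, and your candidate does not work.
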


\begin{remark}
Please remark that $\pi _{1}(U_{r}(p))$ is the geodesic ball centered at $p$
in $M$; therefore, given a point $x$ in $T^{1}M$ we have that $\tau
(x,U_{r}(p))$ coincides with the time which is needed for a geodesic on $M$
having initial data $x$ to enter in the ball $B_{r}(p)$.
\end{remark}

To prove the theorem we have to find a suitable section where to apply
Theorem \ref{GeneralResult}. In the following lemma we define it and prove
that it is a submanifold of $T^{1}M$.

\begin{lemma}
\label{Lemma:Transverse} Let $(U,x_{1},\ldots ,x_{n})$ be a coordinate
neighborhood centered in $p$, such neighborhood is a trivializing
neighborhood for $\pi :TM\rightarrow M$. We denote by $(x^{1},\ldots
,x^{n},v^{1},\ldots ,v^{n})$ the coordinates on $\pi ^{-1}(U)$. Let $h$ be
the function from $\pi ^{-1}(U)\rightarrow \mathbb{R}$ given in coordinates
by 
\begin{equation*}
h(x,v)=\sum_{i,j=1}^{n}g_{ij}(x)x^{i}v^{j},
\end{equation*}%
where the $g_{ij}$ are the coefficients of the Riemannian metric on $TM$.
Then, there exists an open subset of $U$, that, by abuse of notation, we
shall denote again by $U$, such that 
\begin{equation*}
T(p):=h^{-1}(0)\cap T^{1}M\cap \pi ^{-1}(U)
\end{equation*}%
is a submanifold of $T^{1}M$.
\end{lemma}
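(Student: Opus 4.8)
The plan is to show that $T(p)$ is a submanifold by exhibiting it as a regular level set, i.e.\ by verifying that the relevant map is a submersion near $p$, at least after shrinking $U$. The set $T(p)=h^{-1}(0)\cap T^{1}M\cap \pi^{-1}(U)$ is cut out of $\pi^{-1}(U)\cong U\times\mathbb{R}^{n}$ by the two equations $h(x,v)=0$ and $G(x,v):=\sum_{i,j}g_{ij}(x)v^{i}v^{j}-1=0$ (the latter being the defining equation of $T^{1}M$). So it suffices to check that the differential of the map $(h,G):\pi^{-1}(U)\to\mathbb{R}^{2}$ has rank $2$ at every point of $T(p)$, after possibly shrinking $U$ around $p$. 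By the regular value theorem (more precisely, the version for the intersection of two regular level sets with linearly independent differentials) this gives that $T(p)$ is a smooth submanifold of $\pi^{-1}(U)$ of codimension $2$, hence a submanifold of $T^{1}M$ of codimension $1$.

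First I would compute the two differentials in the local coordinates $(x^{1},\dots,x^{n},v^{1},\dots,v^{n})$. Writing everything out, $dG$ involves the $v$-derivatives $\partial G/\partial v^{k}=2\sum_{i}g_{ik}(x)v^{i}$ (plus $x$-derivatives coming from $\partial g_{ij}/\partial x^{k}$), while $dh$ has $v$-derivatives $\partial h/\partial v^{k}=\sum_{i}g_{ik}(x)x^{i}$ and $x$-derivatives $\partial h/\partial x^{k}=\sum_{j}g_{kj}(x)v^{j}+\sum_{i,j}(\partial g_{ij}/\partial x^{k})x^{i}v^{j}$. Now evaluate at the point $p$, i.e.\ at $x=0$: there $\partial G/\partial v^{k}=2\sum_{i}g_{ik}(0)v^{i}$ and $\partial h/\partial x^{k}=\sum_{j}g_{kj}(0)v^{j}$, so along the fiber over $p$ the $v$-gradient of $G$ is $2g(0)v$ and the $x$-gradient of $h$ is $g(0)v$, both of which are nonzero since $g(0)$ is invertible and $v\neq 0$ on $T^{1}M$; moreover $\partial h/\partial v^{k}=0$ at $x=0$, so the two differentials $dh$ and $dG$ are manifestly linearly independent at every point of the fiber $\pi^{-1}(p)\cap T^{1}M$ (one is "purely $x$", the other has a nonzero "purely $v$" part). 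Thus $(h,G)$ is a submersion at every point of $T(p)$ lying over $p$.

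The main obstacle — really the only subtlety — is that $p$ itself need not lie in $T(p)$ in any distinguished way and we need the submersion property on a whole neighborhood, not just over $p$. The resolution is the standard openness of the submersion condition: the set of points of $\pi^{-1}(U)\cap T^{1}M$ where $d(h,G)$ has rank $2$ is open, and we have just shown it contains the compact (in the fiber) slice $\pi^{-1}(p)\cap T^{1}M=\{0\}\times S^{n-1}_{g(0)}$; hence it contains $\pi^{-1}(U')\cap T^{1}M$ for some smaller coordinate ball $U'\ni p$. Replacing $U$ by $U'$ (the promised abuse of notation), $0$ is a regular value of $(h,G)$ on $\pi^{-1}(U)\cap T^{1}M$, so $T(p)$ is a smooth codimension-one submanifold of $T^{1}M$. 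One should also remark that $T(p)$ is nonempty and genuinely lower-dimensional: over each base point $x\in U$ the conditions $h(x,v)=0$, $G(x,v)=0$ cut the unit sphere $\{G=0\}$ by the linear hyperplane $\{h=0\}$ (which passes through the origin in $v$-space and is a proper hyperplane once $x\neq 0$, and all of $v$-space when $x=0$), so $T(p)$ fibers over $U$ with fibers that are $(n-2)$-spheres away from $p$ and the full $(n-1)$-sphere over $p$; this confirms $\dim T(p)=2n-2=\dim T^{1}M-1$ and matches the later use of $T(p)$ as a transverse section to the flow.
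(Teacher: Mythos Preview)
Your proof is correct and follows essentially the same strategy as the paper's: show that $dh$ and the differential of the unit-sphere defining function are linearly independent at every point of the fiber $\pi_1^{-1}(p)\cap T^1M$, then use compactness of that fiber together with openness of the rank-$2$ condition to shrink $U$. Your observation that at $x=0$ the differential $dh$ has vanishing $v$-components while $dG$ has a nonvanishing $v$-component gives linear independence slightly more directly than the paper, which first normalizes coordinates so that $g_{ij}(0)=\delta_{ij}$ and then exhibits a nonvanishing $2\times 2$ minor.
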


\begin{proof}
The proof of this fact is an application of the submersion theorem and we
report it for completeness. We have that 
\begin{equation*}
T^{1}M\cap \pi ^{-1}(U)=\{(x^{1},\ldots ,x^{n},v^{1},\ldots ,v^{n})\mid
\left\vert \left\vert v\right\vert \right\vert =1\}.
\end{equation*}%
First of all we remark that $h^{-1}(0)$ is not a submanifold of $\pi
^{-1}(U) $ (inside $TM$). Indeed, we compute the differential of $h$: 
\begin{equation*}
dh=\sum_{k=1}^{n}\bigg(\sum_{i,j=1}^{n}\frac{\partial g_{ij}}{\partial x^{k}}%
x^{i}v^{j}+g_{kj}v^{j}\bigg)dx^{k}+\sum_{i,j=1}^{n}g_{ij}x^{i}dv^{j};
\end{equation*}%
as it is, this differential is not surjective for every point of $h^{-1}(0)$
and we cannot apply the submersion theorem; the counterimage of $0$ through $%
h$ is not a submanifold. To see this, simply remark that $dh_{(0,0)}=0$. Now
we think of $T^{1}M$ as the submanifold of $TM$ given as the counterimage of 
$n^{-1}(1)$ where $n$ is given in coordinates as $n(x,v)=%
\sum_{i,j=1}^{n}g_{ij}(x)v^{i}v^{j}$. What we want to prove is that for
every point $(0,v)$ in $\pi ^{-1}(0)$ we have that $\ker (dh_{(0,v)})$ is
not contained in $T_{(0,v)}T^{1}M$; therefore, for every point in $\pi
_{1}^{-1}(0)$, $dh$ is surjective and we can apply the submersion theorem.
If we restrict ourselves to $\pi _{1}^{-1}(0)$, since all the $x^{i}$ are $0$%
, we have that $dh$ takes the form: 
\begin{equation*}
dh\bigg|_{\pi _{1}^{-1}(0)}=\sum_{i,j=1}^{n}g_{ij}(0)v^{j}dx^{i};
\end{equation*}%
We compute also: 
\begin{equation*}
dn_{(0,v)}=\sum_{k=1}^{n}\sum_{i,j=1}^{n}\frac{\partial g_{ij}}{\partial
x^{k}}(0)v^{i}v^{j}dx^{k}+\sum_{i,j=1}^{n}g_{ij}(0)v^{i}dv^{j}.
\end{equation*}%
We prove the claim showing that for each point $(0,v)$ we have that $%
dn_{(0,v)}$ and $dh_{(0,v)}$ are linearly independent; to show this we make
some assumptions. As we said already, the coordinate chart is centered in $p$
and we are working on $\pi _{1}^{-1}(0)$; moreover we can take a coordinate
system such that $g_{ij}(0)=\delta _{ij}$, i.e. a coordinate system in which
the coordinate fields are orthogonal in the origin (they cannot be
orthogonal in a neighborhood of the origin unless $M$ is flat). In this
coordinate system we have that: 
\begin{equation*}
dh_{(0,v)}=\sum_{k=1}^{n}v^{k}dx^{k}
\end{equation*}%
and 
\begin{equation*}
dn_{(0,v)}=\sum_{k=1}^{n}\sum_{i,j=1}^{n}\bigg(\frac{\partial g_{ij}}{%
\partial x^{k}}v^{i}v^{j}\bigg)dx^{k}+\sum_{i=1}^{n}v^{i}dv^{i}.
\end{equation*}%
To prove this two sections are linearly independent is the same as proving
that the following matrix has rank $2$: 
\begin{equation*}
\left( 
\begin{array}{cccccc}
\sum_{i,j=1}^{n}\frac{\partial g_{ij}}{\partial x^{1}}v^{i}v^{j} & \cdots & 
\sum_{i,j=1}^{n}\frac{\partial g_{ij}}{\partial x^{n}}v^{i}v^{j} & v^{1} & 
\cdots & v^{n} \\[0.3em] 
v^{1} & \cdots & v^{n} & 0 & \cdots & 0 \\ 
&  &  &  &  & 
\end{array}%
\right) .
\end{equation*}%
Since $T^{1}M_{0}=\{v\in TM_{0}\mid \sum_{i=1}^{n}(v^{i})^{2}=1\}$ we have
that the determinant $(v^{k})^{2}$ of at least one of the $2\times 2$ minors
of the form: 
\begin{equation*}
\left( 
\begin{array}{cc}
\sum_{i,j=1}^{n}\frac{\partial g_{ij}}{\partial x^{k}}v^{i}v^{j} & v^{k} \\%
[0.3em] 
v^{k} & 0 \\ 
& 
\end{array}%
\right)
\end{equation*}%
is different from $0$. From the submersion theorem we have that for each
point $(0,v)$ in $\pi _{1}^{-1}(0)$ there exists an open set $V_{(0,v)}$ in $%
T^{1}M|_{U}$ on which $h$ is a submersion. Since $\pi _{1}^{-1}(0)$ is
compact, we can find a finite collection of open sets $V_{(0,v_{i})}$ whose
union is a neighborhood $V$ of $\pi _{1}^{-1}(0)$ and where $h$ is a
submersion. Therefore, we can rescale $U$ so that $h^{-1}(0)\cap T^{1}M\cap
\pi ^{-1}(U)$ is a well defined submanifold of $T^{1}M$.
\end{proof}

\begin{lemma}
In the hypothesis of Lemma \ref{Lemma:Transverse}, there exists an $\bar{l}$
such that for every $l<\bar{l}$ we have that 
\begin{equation*}
T_{l}(p)=\{x\in T(p),f(x)<l\}
\end{equation*}
is transverse to the flow.
\end{lemma}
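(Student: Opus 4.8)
The plan is to reduce transversality to the non-vanishing of one scalar function and then to evaluate that function over $p$. Recall from (the proof of) Lemma~\ref{Lemma:Transverse} that, after rescaling $U$, the restriction of $h$ to $T^{1}M\cap\pi^{-1}(U)$ is a submersion near $\pi_{1}^{-1}(p)$, so $T(p)=h^{-1}(0)\cap T^{1}M\cap\pi^{-1}(U)$ is a codimension-one submanifold of $T^{1}M$ with $T_{q}T(p)=\ker(dh_{q})\cap T_{q}(T^{1}M)$ at each point $q$. The set $T_{l}(p)=T(p)\cap f^{-1}([0,l))$ is open in $T(p)$ (the function $f(x)=d(\pi_{1}(x),p)$ being continuous), hence has the same tangent spaces. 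The geodesic vector field $X$ is tangent to $T^{1}M$, since the flow preserves the unit speed, so $X$ is transverse to $T_{l}(p)$ at $q$ if and only if $X(q)\notin T_{q}T(p)$, that is, if and only if $dh_{q}(X(q))\neq 0$. It therefore suffices to show that $dh(X)$ does not vanish on $T_{l}(p)$ for $l$ small.

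Next I would compute $dh(X)$ in the coordinates of Lemma~\ref{Lemma:Transverse}, where the geodesic vector field is
\begin{equation*}
X(x,v)=\sum_{k}v^{k}\frac{\partial}{\partial x^{k}}-\sum_{k}\Big(\sum_{i,j}\Gamma^{k}_{ij}(x)v^{i}v^{j}\Big)\frac{\partial}{\partial v^{k}},
\end{equation*}
and $dh$ has already been written out in the proof of that lemma. Pairing the two and using that $\sum_{i,j}g_{ij}(x)v^{i}v^{j}=1$ identically on $T^{1}M$, one gets $dh(X)=1+R(x,v)$ on $T^{1}M\cap\pi^{-1}(U)$, where every term of the remainder $R$ carries a factor $x^{i}$ for some $i$ — this is visible directly from the expression for $dh$, whose $dv^{j}$-component is $\sum_{i}g_{ij}x^{i}$ and whose $dx^{k}$-component is $\sum_{j}g_{kj}v^{j}$ plus terms each containing a factor $x^{i}$. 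In particular $R$ is continuous and vanishes on the fiber $\pi_{1}^{-1}(p)$, so $dh(X)\equiv 1$ over $p$; no estimate on the Christoffel symbols is needed. (Heuristically $h$ is, to first order at $p$, the derivative of $\tfrac12 d(\cdot,p)^{2}$ along the flow, so $dh(X)$ is its second derivative, equal to $1$ at $p$.)

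Finally, since $R$ is continuous and vanishes on the compact fiber $\pi_{1}^{-1}(p)$, a routine compactness argument (using that the fibers of $\pi_{1}$ are compact) produces $\bar l>0$ with $B_{\bar l}(p)\subseteq U$ and $|R|<\tfrac12$ on $\pi_{1}^{-1}(B_{\bar l}(p))$, hence $dh(X)>\tfrac12>0$ there. For $l<\bar l$ we have $\pi_{1}(T_{l}(p))\subseteq B_{l}(p)\subseteq B_{\bar l}(p)$, so $dh(X)\neq 0$ on $T_{l}(p)$, and by the first paragraph $T_{l}(p)$ is transverse to the flow. The only point requiring care is the bookkeeping in the computation of $dh(X)$, but once one notices that $\sum g_{ij}v^{i}v^{j}\equiv 1$ on $T^{1}M$ and that every other term is proportional to a coordinate $x^{i}$, it collapses immediately on $\pi_{1}^{-1}(p)$; the real content of the argument is the judicious choice of the function $h$, already made in Lemma~\ref{Lemma:Transverse}.
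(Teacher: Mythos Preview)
Your proof is correct and follows the same overall strategy as the paper: verify transversality on the fiber $\pi_{1}^{-1}(p)$ and then propagate it to a neighborhood by a compactness argument. The difference lies in how you establish the key fact on the fiber. The paper observes that at a point $(0,v)$ one has $dh_{(0,v)}=\alpha_{(0,v)}$, the contact form, and since the geodesic vector field is Reeb ($\alpha(X)=1$) this gives transversality over $p$ without any computation. You instead pair $dh$ with the explicit coordinate expression for $X$ and read off $dh(X)=\sum_{j,k}g_{jk}v^{j}v^{k}+R(x,v)=1+R(x,v)$ on $T^{1}M$, with every term of $R$ carrying an $x^{i}$. Both routes yield $dh(X)=1$ on $\pi_{1}^{-1}(p)$; the paper's is shorter and more conceptual, while yours is self-contained and does not invoke the Reeb property. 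Your compactness step (bounding $|R|<\tfrac12$ on a tube $\pi_{1}^{-1}(B_{\bar l}(p))$) is a clean variant of the paper's finite-cover argument and is perfectly fine.
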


\begin{proof}
Consider the submanifold $T(p)\subset T^{1}M$. For each point in $\pi
_{1}^{-1}(0)$ (and only for these points) we have that the tangent space to $%
T(p)$ is given by the kernel of $dh|_{(0,v)}\equiv \alpha _{(0,v)}$, where $%
\alpha$ is the contact form; therefore, $T(p)$ is transversal to the
geodesic flow on $\pi _{1}^{-1}(0)$. For each point $q$ in $\pi_1^{-1}(0) $
we can take an open neighborhood $V_q$ where the flow is transverse to $T(p)$%
; since $\pi_1^{-1}(0)$ is compact we can extract a finite cover $V_{q_i}$;
we take now 
\begin{equation*}
\bar{l}=\min_{i}\sup_{\tilde{q}\in U_{q_i}}f(\tilde{q});
\end{equation*}
for each $l<\bar{l}$ we have that $T_l(p)$ is transverse to the geodesic
flow.
\end{proof}

\begin{remark}
Please remark that for $l$ small enough $\pi_1(T_l(p))$ is a geodesic ball
of radius $l$. Indeed if we take a geodesic ball $B_{\tilde{l}}(p)$ of
radius $\tilde{l}$ and fix an $x$ in $B_{l}(0)$: we can solve the linear
homogeneous equation $\sum_{i=1}^{n}g_{ij}(x)x^{i}v^{j}=0$ in the unknown $v$%
; this is equivalent to find a vector in the orthogonal complement to the
vector $w^j=(g_{ij}(x)x^{i})^j$ in $\mathbb{R}^{2n}$ with the canonical
scalar product, since the canonical scalar product is nondegenerate such a
vector exists. If we chose the radius $\tilde{l}$ small enough the couple $%
(x,v)$ belongs to $T_{\tilde{l}}(p)$.
\end{remark}

We can now prove Theorem \ref{loglaw3}.

\begin{proof}
(of Theorem \ref{loglaw3}) We found a suitable transverse section to the
flow and we know that the flow is superpolynomially mixing (see Corollary %
\ref{CurvaturaNegativa}) so we are in the condition to apply Theorem \ref%
{GeneralResult}. To this extent we have to evaluate the right hand of (\ref%
{general}).

The invariant volume form $\omega=\alpha \wedge (d\alpha )^{n-1}$ of $T^{1}M$
takes the form: 
\begin{equation*}
\omega=\alpha \wedge (d\alpha )^{n-1}=\det
(g_{hk}(x))\sum_{i=1}^{n}(-1)^{i}v^{i}dv^{1}\wedge \cdots \wedge \widehat{%
dv^{i}}\wedge \cdots \wedge dv^{n}\wedge dx^{1}\wedge \cdots \wedge dx^{n},
\end{equation*}%
where $\det (g_{hk}(x))$ is the determinant of the matrix of the metric.

We cover $T_{l}(p)$ by open sets 
\begin{equation*}
V_i=\{(x^1,\ldots,x^n,v^1,\ldots,v^n)\in T_l(p)\mid \sum_{k=1}^n g_{ik}
v^k\neq 0\},
\end{equation*}
on each $V_i$ a point $(x,v)$ of $T_{l}(p)$ has $i$-th coordinate 
\begin{equation*}
x^i(x^1,\ldots,\widehat{x^i},\ldots,x^n,v^1,\ldots,v^n)=\frac{%
-\sum_{j,k=1;j\neq i}^n g_{jk}x^jv^k}{\sum_{k=1}^n g_{ik} v^k},
\end{equation*}
where the hat means we are not taking the coordinate into consideration. We
can give also explictly a partition of unity $\{\phi_i\}$ subordinate to the
cover $V_i$, by defining 
\begin{equation*}
\phi_i:= v^i \sum_{j=1}^n g_{ij} v^j= \sum_{j=1}^n g_{ij} v^i v^j;
\end{equation*}
clearly $\sum_{i=1}^n \phi_i=1$.

We can apply the flow box theorem to the geodesic flow and build an atlas
for $C_{\epsilon ,l}$ where the coordinate neighborhoods are given by 
\begin{equation*}
W_{i}:=\{\Phi ^{-t}(x)\mid x\in V_{i},t\in \lbrack 0,\epsilon )\},
\end{equation*}%
with partition of unity $\{\psi _{i}\}$, the pull-back through the flow of
the partition of unity $\{\phi _{i}\}$. By definition 
\begin{equation*}
\int_{C_{\epsilon ,l}}\omega =\sum_{i}\int_{W_{i}}\psi _{i}\cdot \omega .
\end{equation*}%
The coordinates for $C_{\epsilon ,l}$ are induced on each $W_{i}$ by the
geodesic flow: 
\begin{align*}
(x_{1},\ldots ,\widehat{x^{i}},\ldots ,x^{n},& v^{1},\ldots ,v^{n},t)\mapsto 
\\
& \Phi ^{-t}(x^{1},\ldots ,x^{i}(x^{1},\ldots ,\widehat{x^{i}},\ldots
,x^{n},v^{1},\ldots ,v^{n}),\ldots ,x^{n},v^{1},\ldots ,v^{n}).
\end{align*}%
Through a linear change of coordinates, we can choose coordinates such that
the matrix of the metric is such that, over the point $p$, it is represented
in coordinates by the identity matrix; as remarked above we can do this in
the fiber over the point $p$ but not in a neighborhood of this fiber since
this is equivalent to flatness.

From basic properties of the geodesic flows it follows that, in we take any
point $q=(0,w)$ in the fiber over $p$, we have that, for every $k=1,\ldots ,n
$ 
\begin{equation*}
\frac{\partial x^{k}}{\partial t}(q)=w^{k}.
\end{equation*}%
Moreover, if $j=1,\ldots ,i-1,i+1,\ldots ,n$, $k=1,\ldots ,n$, $l=1,\ldots ,n
$ 
\begin{equation*}
\frac{\partial x^{j}}{\partial x^{k}}(q)=\delta _{k}^{j},\quad \frac{%
\partial v^{l}}{\partial x^{k}}(q)=0,\quad \frac{\partial v^{l}}{\partial
v^{k}}(q)=\delta _{k}^{l},\quad \frac{\partial v^{l}}{\partial x^{k}}(q)=0,
\end{equation*}%
where $\delta _{k}^{l}=1$ if $k=l$ and $0$ otherwise. If $j=i$ 
\begin{equation*}
\frac{\partial x^{i}}{\partial x^{j}}(q)=\frac{w^{j}}{w^{i}}.
\end{equation*}%
If we compute the determinant of the $n\times n$ matrix given by 
\begin{equation*}
\left[ 
\begin{array}{c|c}
\partial x^{k}/\partial x^{j}(q) & \partial x^{k}/\partial t(q)%
\end{array}%
\right] 
\end{equation*}%
we see it is $1/w^{i}$. This implies that, on $W_{i}$, if $\epsilon $ and $l$
are small enough, the pull-back of the volume form through this coordinate
change, near the fiber over $p$, can be written as 
\begin{align*}
\frac{1}{v^{i}}\sum_{j=1}^{n}(-1)^{j}v^{j}dv^{1}\wedge \cdots \wedge 
\widehat{dv^{j}}\wedge \cdots \wedge dv^{n}& \wedge dx^{1}\wedge \ldots
\wedge \widehat{dx^{i}}\wedge \ldots \wedge dx^{n}\wedge dt \\
& +\omega ^{\prime }(\epsilon ,l)
\end{align*}%
where $\omega ^{\prime }(\epsilon ,l)$ is a $2n-1$ form tending to $0$ as $%
\epsilon ,l\rightarrow 0$. This implies that the integral 
\begin{equation*}
\int_{W_{i}}\psi _{i}\cdot \omega =K_{1}\cdot \epsilon \cdot l^{n-1}+\text{%
higher order terms in $\epsilon $ and $l$},
\end{equation*}%
where $K_{1}$ is a constant. This tells us that 
\begin{equation*}
\mu (C_{\epsilon ,l})=K_{2}\cdot \epsilon \cdot l^{n-1}+\text{higher order
terms in $\epsilon $ and $l$},
\end{equation*}%
where $K_{2}$ is a constant. Therefore 
\begin{equation*}
\lim_{\epsilon \rightarrow 0}\lim_{l\rightarrow 0}\frac{\log \mu
(C_{\epsilon ,l})}{\log l}=n-1.
\end{equation*}%
Then Theorem \ref{GeneralResult} implies that 
\begin{equation*}
\lim_{r\rightarrow 0}\frac{\log \tau (x,T_{r}(p))}{-\log r}=\lim_{\epsilon
\rightarrow 0}\lim_{l\rightarrow 0}\frac{\log \mu (C_{\epsilon ,l})}{\log l}%
=n-1.
\end{equation*}%
The geodesics are parametrized by arclength and therefore the time of the
flow coincides with the length of the geodesic. Let us take a typical point $%
x$ in $T^{1}M$. We have that for $r$ small enough 
\begin{equation*}
\tau (x,T_{2r}(p))-r\leq \tau (x,U_{r}(p))\leq \tau (x,T_{r}(p)).
\end{equation*}%
Consequently: 
\begin{eqnarray*}
\lim_{r\rightarrow 0}\frac{\log (\tau (x,T_{r}(p)))}{-\log (r)}
&=&\lim_{r\rightarrow 0}\frac{\log (\tau (x,T_{2r}(p))-r)}{-\log (2r)} \\
&\leq &\lim_{r\rightarrow 0}\frac{\log (\tau (x,U_{r}(p)))}{-\log (r)}\leq
\lim_{r\rightarrow 0}\frac{\log (\tau (x,T_{r}(p)))}{-\log (r)}.
\end{eqnarray*}%
And the statement is established.
\end{proof}

With similar methods we can also have other results. In next proposition we
look for the behavior of the time needed for a geodesics to reach a certain
point on the base space, arriving from a prescribed direction, as it is
intuitive, in this case the exponent is much bigger.

\begin{proposition}
\label{Theo:Loglaw4}In the same assumptions of Theorem \ref{loglaw3}, let us
fix $q\in T^{1}M$ and let $d_{S}$ be the Riemannian distance on $T^{1}M$
with respect to the Sasaki metric (the natural metric on the tangent
bundle). Let us denote 
\begin{equation*}
\tilde{B}_{r}(q)=\{x\in T^{1}M,d_{S}(x,q)<r\},
\end{equation*}%
the geodesic ball in $T^{1}M$ with radius $r$, and by $\tau (x,\tilde{B}%
_{r}(q))$ as before, the time needed for a point $x$ in $T^{1}M$ to reach $%
\tilde{B}_{r}(q)$ under the action of the flow. We have that for almost
every point $x$ in $T^{1}M$ 
\begin{equation*}
\lim_{r\rightarrow 0}\frac{\log (\tau (x,\tilde{B}_{r}(q)))}{-\log (r)}=2n-2.
\end{equation*}
\end{proposition}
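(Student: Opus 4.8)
The plan is to mimic the proof of Theorem \ref{loglaw3}, replacing the transversal section $T_l(p)$ (which was adapted to balls in the base manifold) by a section adapted to balls in $T^1M$ with the Sasaki metric. First I would fix the point $q=(0,w)\in T^1M$ and, working in a trivializing coordinate neighborhood of $TM$ centered at $\pi(q)$ as set up before the statement of Theorem \ref{loglaw3}, construct a submanifold $\Sigma(q)\subset T^1M$ of codimension one which is transverse to the geodesic flow near $q$ and contains $q$. The natural choice is to take a small piece of the hypersurface $h^{-1}(0)$ of Lemma \ref{Lemma:Transverse} (recall $h(x,v)=\sum g_{ij}(x)x^iv^j$, whose differential at points of $\pi_1^{-1}(0)$ is the contact form $\alpha$, hence transverse to $V$); restricting to a neighborhood of $q$ inside this hypersurface gives a $(2n-2)$-dimensional transverse section. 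One then verifies, exactly as in the two lemmas preceding the proof of Theorem \ref{loglaw3}, that for $l$ small this is a genuine submanifold and that the flow is transverse to it on the relevant piece. The only new ingredient is the target function: I would take $g:\Sigma(q)\to\mathbb R^+$ to be (a Lipschitz modification of) $d_S(\cdot,q)$ restricted to $\Sigma(q)$, and set $B_l=\{x\in\Sigma(q): g(x)<l\}$, so that $\pi_1^{-1}$-type projection along the flow of the Sasaki ball $\tilde B_r(q)$ is comparable, for $r$ small, to $C_{\epsilon,l}$ with $l\asymp r$.

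Next I would compute $d=\lim_{\epsilon\to0}\lim_{l\to0}\frac{\log\mu(C_{\epsilon,l})}{\log l}$ for this section. As in the proof of Theorem \ref{loglaw3}, use the flow box theorem to write $C_{\epsilon,l}$ as a union of boxes $W_i=\{\Phi^{-t}(x):x\in\Sigma(q),\ d_S(x,q)<l,\ t\in[0,\epsilon)\}$, and express the invariant volume form $\omega=\alpha\wedge(d\alpha)^{n-1}$ in the flow-box coordinates. The key point is a dimension count: the flow direction contributes a factor $\epsilon$, and the remaining $(2n-2)$ transverse directions of $\Sigma(q)$, equipped with a metric uniformly comparable to the Sasaki metric near $q$, give a $(2n-2)$-dimensional ball of radius $l$, hence volume $\asymp l^{2n-2}$. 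So $\mu(C_{\epsilon,l})=K\cdot\epsilon\cdot l^{2n-2}+$ higher-order terms, and therefore $\lim_{\epsilon\to0}\lim_{l\to0}\frac{\log\mu(C_{\epsilon,l})}{\log l}=2n-2$. Feeding this into Theorem \ref{GeneralResult} yields $\lim_{r\to0}\frac{\log\tau(x,B_r)}{-\log r}=2n-2$ for $\mu$-a.e.\ $x$, where $B_r$ is the trace on $\Sigma(q)$ of the Sasaki ball.

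Finally I would pass from the section target $B_r$ back to the genuine Sasaki ball $\tilde B_r(q)$ in $T^1M$, using the same squeezing argument as at the end of the proof of Theorem \ref{loglaw3}: since geodesics are parametrized by arclength and the flow is transverse to $\Sigma(q)$, a point whose orbit enters $\tilde B_r(q)$ must, within a bounded time (of order $r$), cross $\Sigma(q)$ inside a ball of radius comparable to $r$, and conversely crossing $\Sigma(q)$ in $B_r$ puts the orbit inside $\tilde B_{Kr}(q)$ for some constant $K$. This gives inequalities of the form $\tau(x,B_{2r})-Cr\le\tau(x,\tilde B_r(q))\le\tau(x,B_r)$ (up to relabeling constants), and taking logarithms and $r\to0$ forces the limit for $\tilde B_r(q)$ to equal $2n-2$ as well.

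I expect the main obstacle to be the bookkeeping in the middle step: verifying that the metric induced on the section $\Sigma(q)$ is uniformly bi-Lipschitz to the Sasaki metric (so that traces of Sasaki balls really are $(2n-2)$-dimensional "round" sets), and checking that the Lipschitz regularity of $g=d_S(\cdot,q)$ and of the flow-box projection is good enough to invoke Theorem \ref{GeneralResult} (which requires a Lipschitz target function). The transversality and submanifold claims are essentially identical to those already established for $T(p)$, and the volume computation is a routine dimension count once the coordinates are in place; it is the identification of the local geometry of $\Sigma(q)$ near $q$ with a Euclidean $(2n-2)$-ball that requires the most care.
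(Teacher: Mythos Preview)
Your proposal is correct and follows essentially the same approach as the paper: choose a transverse section through $q$, compute that the flow-box cylinder has measure $\asymp \epsilon\cdot l^{2n-2}$, apply Theorem~\ref{GeneralResult}, and then sandwich to pass from the section target to the Sasaki ball. The paper's own proof is in fact far more terse than yours---it simply says ``take any transverse, differentiable section'' rather than reusing the specific $h^{-1}(0)$ construction---and skips the Lipschitz and bi-Lipschitz verifications you correctly flag as the only points needing care.
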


\begin{proof}
In this case we take any transverse, differentiable section to the geodesic
flow and we take the intersection of $\tilde{B}_{r}(q)$ with this section;
on this target we build the cylinder $\tilde{C}_{\epsilon ,l}$. Following
the line of Theorem \ref{loglaw3}, we see that 
\begin{equation*}
\lim_{\epsilon \rightarrow 0}\lim_{r\rightarrow 0}\frac{\log \mu (\tilde{C}%
_{\epsilon ,r})}{\log r}=2n-2.
\end{equation*}%
Now, Theorem \ref{GeneralResult} implies: 
\begin{equation*}
\lim_{r\rightarrow 0}\frac{\log \tau (x,\tilde{T}_{r}(p))}{-\log r}%
=\lim_{\epsilon \rightarrow 0}\lim_{r\rightarrow 0}\frac{\log \mu (\tilde{C}%
_{\epsilon ,r})}{\log r}=2n-2.
\end{equation*}%
Which easily gives the statement as above.
\end{proof}

Theorem \ref{loglaw3} gives an estimation for the hitting time of particular
sets for the geodesic flow. Now we consider the behavior of the distance
between the orbit of the typical point $x$ and the target point $p$, which
is the "center" of the set. This will give the following generalizations and
extensions of the above cited Theorem \ref{mau}.

\begin{proposition}
\label{loglaw4}Let $M$ be a compact, connected, $C^{4}$ manifold of
dimension $n$ with strictly negative sectional curvature and $T^{1}M$ be its
unitary tangent bundle. Let $\pi _{1}:T^{1}M\rightarrow M$ be the canonical
projection,$\ \mu $ the Liouville measure on $T^{1}M$ , and $d$ the
Riemannian distance on $M,$ then for each $p\in M$:%
\begin{equation}
\underset{t\rightarrow \infty }{\lim \sup }\frac{-\log d(p,\pi _{1}(\Phi
^{t}x))}{\log t}=\frac{1}{n-1}
\end{equation}%
holds for $\mu $ almost each $x\in T^{1}M.$
\end{proposition}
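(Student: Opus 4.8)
The plan is to deduce Proposition \ref{loglaw4} from Theorem \ref{loglaw3} by a standard dictionary between the hitting time asymptotics $\tau(x,U_r(p))$ and the $\limsup$ of the ratio $-\log d(p,\pi_1(\Phi^t x))/\log t$. The key observation is that, by definition of the target sets, $\Phi^t x \in U_r(p)$ if and only if $d(p,\pi_1(\Phi^t x)) < r$; hence the statement that $\tau(x,U_r(p))$ grows like $r^{-(n-1)}$ is precisely the statement that the orbit first comes within distance $r$ of $p$ at a time of order $r^{-(n-1)}$. I would first fix a full-measure set of points $x$ for which the conclusion of Theorem \ref{loglaw3} holds, namely $\lim_{r\to 0}\frac{\log\tau(x,U_r(p))}{-\log r}=n-1$.

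Next I would carry out the elementary conversion. For a fixed such $x$, introduce the record-distance function: for each $t$ let $\rho(t)=\inf_{0\le s\le t} d(p,\pi_1(\Phi^s x))$, the closest the orbit has come to $p$ up to time $t$; $\rho$ is nonincreasing and, by topological transitivity (which follows from ergodicity of the geodesic flow), $\rho(t)\to 0$. There is a sequence of ``record times'' $t_i\to\infty$ and radii $r_i=\rho(t_i)\to 0$ with $\tau(x,U_{r}(p))$ essentially equal to $t_i$ for $r$ slightly below $r_i$; quantitatively, $\tau(x,U_{r_i+\delta}(p))\le t_i$ and $\tau(x,U_{r_i}(p))>t_{i}-\eta$ for any $\eta>0$. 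Taking logarithms in Theorem \ref{loglaw3}'s limit along these record times gives $\lim_i \frac{\log t_i}{-\log r_i}=n-1$, i.e. $t_i = r_i^{-(n-1)+o(1)}$, and at these times $d(p,\pi_1(\Phi^{t_i}x))=r_i$, so $\frac{-\log d(p,\pi_1(\Phi^{t_i}x))}{\log t_i}\to \frac{1}{n-1}$, which bounds the $\limsup$ from below by $\frac1{n-1}$. For the reverse inequality, note that between two consecutive records the orbit stays at distance $\ge r_i$ from $p$ while time runs from $t_i$ up to $t_{i+1}$; since $\frac{\log t_{i+1}}{-\log r_{i+1}}\to n-1$ and $r_{i+1}\le r_i$, for any $t$ with $t_i\le t< t_{i+1}$ we have $-\log d(p,\pi_1(\Phi^t x))\le -\log r_i$ and $\log t\ge \log t_i \ge (n-1-o(1))(-\log r_i)$, so $\frac{-\log d(p,\pi_1(\Phi^t x))}{\log t}\le \frac{1}{n-1}+o(1)$ uniformly on the interval; hence the $\limsup$ is at most $\frac1{n-1}$.

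The only point requiring a little care — and the place I expect to spend the most effort — is making the passage from "$\tau(x,U_r(p))$ has exponent $n-1$ for a.e. $x$ as $r\to0$" to a statement \emph{along the record sequence} completely rigorous, i.e. ensuring that the $o(1)$ error terms on the upper and lower interpolation of $\tau$ between records really do go to zero and do not accumulate. Concretely one must check that $\log t_{i+1}/\log t_i \to 1$, equivalently that consecutive record radii satisfy $\log r_{i+1}/\log r_i\to 1$; this follows because if the exponent limit in Theorem \ref{loglaw3} exists and equals $n-1$, the function $r\mapsto \log\tau(x,U_r(p))$ cannot jump by more than a $(1+o(1))$ factor in its dependence on $\log r$, and $\tau(x,U_{r_{i+1}}(p))=t_{i+1}$ while $\tau(x,U_{r}(p))\le t_i$ for $r$ just above $r_{i+1}$, forcing $\log t_{i+1}=(n-1+o(1))(-\log r_{i+1})$ and $\log t_i\ge (n-1-o(1))(-\log r_{i+1})$ as well. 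Once this Borel--Cantelli--free bookkeeping is in place, the squeeze of the previous paragraph yields $\limsup_{t\to\infty}\frac{-\log d(p,\pi_1(\Phi^t x))}{\log t}=\frac{1}{n-1}$ for $\mu$-a.e.\ $x$, which is exactly the assertion. Since $p\in M$ was arbitrary, the proof is complete.
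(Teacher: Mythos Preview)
Your approach is the same as the paper's: both deduce the $\limsup$ statement from Theorem~\ref{loglaw3} by passing through the running-minimum distance (your $\rho(t)$, the paper's $d_t$). The paper's write-up is slightly more compact --- it sets $d_t=t^{-\alpha(t)}$, uses $\tau(x,U_{d_t})\le t$ together with Theorem~\ref{loglaw3} to force $\alpha(t)\le 1/(n-1-\epsilon)$, and then invokes the infinitely many $t$ with $\tau(x,U_{d_t})=t$ (your record times, where $d_t=d$) for the matching lower bound. Your first two paragraphs carry out this same argument in record-time language and are correct.

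Your third paragraph, however, is both unnecessary and contains an error. It is unnecessary because the upper bound in your second paragraph already closes: for $t\in[t_i,t_{i+1})$ you have $-\log d\le-\log r_i$ and $\log t\ge\log t_i$, while $\log t_i\ge(n-1-\epsilon)(-\log r_i)$ follows from Theorem~\ref{loglaw3} applied at radii $r\downarrow r_i$; hence the ratio is at most $1/(n-1-\epsilon)$ with $\epsilon\to0$ as $i\to\infty$, and no control on $\log t_{i+1}/\log t_i$ is required. The error is the assertion that $\tau(x,U_r(p))\le t_i$ for $r$ just above $r_{i+1}$: for $r\in(r_{i+1},r_i)$ the orbit has only come to distance $r_i>r$ by time $t_i$, so $\tau(x,U_r(p))$ is close to $t_{i+1}$, not $t_i$. (The conclusion $\log t_{i+1}/\log t_i\to1$ is in fact true --- apply Theorem~\ref{loglaw3} at $r\to r_i^{-}$ and $r\to r_i^{+}$ --- but you do not need it.)
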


\begin{proof}
Let $d_{t}(x,p)=\inf_{s\leq t}\mathrm{dist}(\pi _{1}(\Phi ^{s}(x)),p)$. By
the ergodicity of the flow $d_{t}\rightarrow 0$ as $t\rightarrow \infty $.
Without loss of generality we can suppose that for $t$ big enough $%
d_{t}=t^{-\alpha (t)}$. By Theorem \ref{loglaw3} 
\begin{equation*}
\underset{t\rightarrow \infty }{\lim }\frac{\log \tau (x,U_{t^{-\alpha (t)}})%
}{-\log (t^{-\alpha (t)})}=n-1,
\end{equation*}%
if $t$ is big enough, $\forall \epsilon >0$, then 
\begin{equation}
t^{\alpha (t)(n-1)-\alpha (t)\epsilon }\leq \tau (x,U_{t^{-\alpha (t)}})\leq
t^{\alpha (t)(n-1)+\alpha (t)\epsilon }
\end{equation}%
but by the way $\alpha $ is defined $\tau (x,U_{t^{-\alpha (t)}})\leq t$;
hence $t^{\alpha (t)(n-1)-\alpha (t)\epsilon }\leq t$ , $\alpha
(t)(n-1)-\alpha (t)\epsilon \leq 1$ and $\alpha (t)\leq \frac{1}{%
n-1-\epsilon }$. This implies that 
\begin{equation}
\underset{t\rightarrow \infty }{\lim \sup }\frac{-\log d_{t}(p,\pi _{1}(\Phi
^{t}x))}{\log t}\leq \frac{1}{n-1}.  \label{logla4eq}
\end{equation}%
On the other hand there are infinitely many $t$ such that $\tau
(x,B_{t^{-\alpha (t)}})=t$. This, with the same computations as above
implies the statement since $d_{t}\leq d$ and $d_{t}=d$ when $\tau
(x,B_{t^{-\alpha (t)}})=t$.
\end{proof}

Our estimation of the hitting time allows also to state a relation which can
be seen as a strong form of the logatithm law given in (\ref{loglaw1}) and (%
\ref{maueq}).

\begin{proposition}
\label{loglaw5}Under the assumptions of the above Proposition \ref{loglaw4}.
Let 
\begin{equation*}
d_{t}(x,p)=\inf_{s\leq t}\mathrm{dist}(\pi _{1}(\Phi ^{s}(x)),p)
\end{equation*}%
then for each $p\in M$:%
\begin{equation}
\underset{t\rightarrow \infty }{\lim }\frac{-\log d_{t}(p,\pi _{1}(\Phi
^{t}x))}{\log t}=\frac{1}{n-1}
\end{equation}%
holds for almost each $x\in T^{1}M$.
\end{proposition}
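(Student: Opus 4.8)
The plan is to upgrade the $\limsup$ of Proposition \ref{loglaw4} to a genuine limit, by exploiting the monotonicity of $t\mapsto d_t$ to turn the estimate of Theorem \ref{loglaw3} into a two-sided squeeze. Write
\begin{equation*}
d_t=\inf_{0\le s\le t}\mathrm{dist}(\pi_1(\Phi^s(x)),p),\qquad \alpha(t)=\frac{-\log d_t}{\log t},
\end{equation*}
so that $d_t=t^{-\alpha(t)}$ and the assertion is exactly that $\alpha(t)\to\frac{1}{n-1}$. First I would discard a null set of $x$: by ergodicity of the geodesic flow the orbit of almost every $x$ approaches $p$ arbitrarily closely, hence $d_t\to0$; and the set of $x$ whose geodesic actually meets $p$ is null, being the image in $T^1M$ of the $C^1$ map $(s,v)\mapsto\Phi^s(v)$ on the manifold $\mathbb{R}\times\pi_1^{-1}(p)$, whose dimension is $n<2n-1=\dim T^1M$; so I may also assume $d_t>0$ for every $t$. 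Finally I restrict to the full-measure set on which Theorem \ref{loglaw3} gives $\lim_{r\to0}\frac{\log\tau(x,U_r(p))}{-\log r}=n-1$.

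Next I would establish, for every such $x$ and every $t$ (recall $d_t>0$), the sandwich
\begin{equation*}
\tau(x,U_{2d_t}(p))\le t\le\tau(x,U_{d_t}(p)).
\end{equation*}
The right inequality is immediate from the definition of $d_t$: for all $s\in[0,t]$ one has $\mathrm{dist}(\pi_1(\Phi^s(x)),p)\ge d_t$, so $\Phi^s(x)\notin U_{d_t}(p)$, i.e.\ the flow from $x$ has not hit $U_{d_t}(p)$ by time $t$. The left inequality uses that $[0,t]$ is compact and $s\mapsto\mathrm{dist}(\pi_1(\Phi^s(x)),p)$ continuous, so the infimum defining $d_t$ is attained at some $s^\ast\le t$; since $d_t>0$ this forces $\Phi^{s^\ast}(x)\in U_{2d_t}(p)$, so the flow has entered $U_{2d_t}(p)$ by time $t$.

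Then, since $d_t\to0$ makes $\tau(x,U_{d_t}(p))$ and $\tau(x,U_{2d_t}(p))$ tend to $\infty$ (by Theorem \ref{loglaw3}, $\log\tau(x,U_r(p))=(n-1+o(1))(-\log r)$ as $r\to0$), taking logarithms in the sandwich and dividing by $-\log d_t>0$ gives, for large $t$,
\begin{equation*}
\frac{\log\tau(x,U_{2d_t}(p))}{-\log d_t}\le\frac{\log t}{-\log d_t}=\frac{1}{\alpha(t)}\le\frac{\log\tau(x,U_{d_t}(p))}{-\log d_t}.
\end{equation*}
The right-hand side tends to $n-1$ by Theorem \ref{loglaw3} along $r=d_t\to0$; the left-hand side equals $\frac{\log\tau(x,U_{2d_t}(p))}{-\log(2d_t)}\cdot\frac{-\log(2d_t)}{-\log d_t}$, whose first factor tends to $n-1$ (Theorem \ref{loglaw3} along $r=2d_t\to0$) and whose second factor is $1+\frac{\log2}{-\log d_t}\to1$, so it too tends to $n-1$. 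By the squeeze $\frac{1}{\alpha(t)}\to n-1$, i.e.\ $\alpha(t)\to\frac{1}{n-1}$; the comparison between the sections $U_r(p)$ and the balls of the statement, and between flow time and arclength, is the same as at the end of the proof of Theorem \ref{loglaw3}.

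The step I expect to demand the most care is the one used twice above: feeding the $x$-dependent radii $d_t(x)$ and $2d_t(x)$ into the limit of Theorem \ref{loglaw3}, which is only a pointwise-in-$r$ statement. This is legitimate because, for each $x$ in the full-measure set fixed at the outset, that limit holds as $r\to0^+$ while $d_t(x)\to0$ and $2d_t(x)\to0$ as $t\to\infty$, so the composite limits are governed by it. Everything else is routine: the only reason Proposition \ref{loglaw4} yielded a mere $\limsup$ is that it used the genuine distance $d$ rather than its monotone running infimum $d_t$, and it is exactly this monotonicity that supplies the two-sided bound here.
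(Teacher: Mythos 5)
Your proof is correct, and it rests on exactly the same ingredients as the paper's: the a.e. limit of Theorem \ref{loglaw3} for the targets $U_r(p)$, together with the elementary duality between the running minimum $d_t$ and the hitting times $\tau(x,U_r(p))$. The difference is organizational: the paper proves only the $\liminf\geq\frac{1}{n-1}$ half here, arguing by contradiction along a sequence $t_i$ with radii $l_i=t_i^{1/(-n+1-\epsilon)}$, and imports the $\limsup\leq\frac{1}{n-1}$ half from Proposition \ref{loglaw4} (Equation \ref{logla4eq}); you instead obtain both halves at once from the two-sided sandwich $\tau(x,U_{2d_t}(p))\leq t\leq\tau(x,U_{d_t}(p))$ and a squeeze, which makes the proof self-contained (it does not pass through Proposition \ref{loglaw4} at all) and slightly more careful: you explicitly discard the null set of orbits that pass exactly through the fiber over $p$ (so $d_t>0$), and the factor $2$ in $U_{2d_t}$ correctly handles the fact that $U_r(p)$ is defined by a strict inequality, a point the paper glosses over when it writes $\tau(x,U_{t^{-\alpha(t)}})\leq t$. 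Your justification for plugging the $x$-dependent radii $d_t,2d_t\to 0$ into the pointwise-in-$r$ limit of Theorem \ref{loglaw3} is sound, since that limit is taken for the fixed $x$ in the full-measure set chosen at the outset. The only blemish is a sign slip: the second factor on the left-hand side is $\frac{-\log(2d_t)}{-\log d_t}=1-\frac{\log 2}{-\log d_t}$, not $1+\frac{\log 2}{-\log d_t}$; either way it tends to $1$, so nothing is affected.
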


\begin{proof}
After Proposition \ref{loglaw4} (see Equation \ref{logla4eq}), it is
sufficient to prove 
\begin{equation*}
\underset{t\rightarrow \infty }{\lim \inf }\frac{-\log d_{t}(p,\pi _{1}(\Phi
^{t}x))}{\log t}\geq \frac{1}{n-1}.
\end{equation*}%
Suppose the converse is true: there is a sequence $t_{i}$ such that 
\begin{equation*}
\underset{i\rightarrow \infty }{\lim }\frac{-\log d_{t}(p,\pi _{1}(\Phi
^{t_{i}}x))}{\log t_{i}}<\frac{1}{n-1}.
\end{equation*}%
Then there exist an $\epsilon >0$ such that 
\begin{equation*}
d_{t}(p,\pi _{1}(\Phi ^{t_{i}}x))>t_{i}^{\frac{1}{-n+1-\epsilon }}
\end{equation*}%
and then 
\begin{equation*}
\tau (x,U_{t_{i}^{\frac{1}{-n+1-\epsilon }}})>t_{i}.
\end{equation*}%
Setting $l_{i}=t_{i}^{\frac{1}{-n+1-\epsilon }},$ this gives $\tau
(x,U_{l_{i}})>l_{i}^{-n+1-\epsilon }$ and 
\begin{equation*}
\frac{\log \tau (x,U_{l_{i}})}{-\log l_{i}}>n-1+\epsilon
\end{equation*}%
contradicting 
\begin{equation*}
\underset{t\rightarrow 0}{\lim }\frac{\log \tau (x,U_{t})}{-\log (t)}=n-1,
\end{equation*}%
and therefore proving the theorem.
\end{proof}

\end{document}